\documentclass[11pt]{amsart}
\usepackage{amssymb,latexsym,amsmath}

\usepackage{accents} 
\usepackage{url}

\usepackage[all,2cell,ps]{xy}

\theoremstyle{plain}
\newtheorem{theorem}{Theorem}[section]
\newtheorem{proposition}[theorem]{Proposition}
\newtheorem{lemma}[theorem]{Lemma}
\newtheorem{corollary}[theorem]{Corollary}
\newtheorem{fact}[theorem]{Fact}

\theoremstyle{definition}

\newtheorem{problem}[theorem]{Problem}

\newcommand{\syn}{\operatorname{syn}}
\newcommand{\term}{\operatorname{term}}

\newcommand{\A}{\mathbf A}
\newcommand{\B}{\mathbf B}
\newcommand{\T}{\mathbf T}

\newcommand{\V}{\mathcal V}
\renewcommand{\H}{\mathcal H}

\title{Profiniteness in finitely generated varieties is undecidable }

\author{Anvar M. Nurakunov}

\address{Institute of Mathematics, National Academy of Sciences of the Kyrgyz Republic,
pr. Chu 265a, Bishkek, 720071 Kyrgyzstan}
\email{a.nurakunov@gmail.com}

\author{Micha{\l} M. Stronkowski}

\address{Faculty of Mathematics and Information Sciences,
Warsaw University of Technology, ul. Koszykowa 75, 00-662
Warsaw, Poland}
\email{m.stronkowski@mini.pw.edu.pl}

\keywords{Profinite algebras, standard varieties, undecidability.}

\subjclass[2010]{22A30, 03D35, 03C05, 06E15, 08B25}






















\thanks{The first author is supported by Alexander von Humboldt Foundation (Fellowship 2017) and Ministry of Education and Science of Kazakhstan.
The second author is supported by the Polish National Science Centre grant no. DEC-2011/01/D/ST1/06136.  
}

\begin{document}

\begin{abstract}
Profinite algebras are exactly those that are isomorphic to inverse limits  of finite algebras. Such algebras are naturally equipped with Boolean topologies. A variety $\V$ is standard if every Boolean topological algebra with the algebraic reduct in $\V$ is profinite.   

We show that there is no algorithm which takes as input a finite algebra $\A$ of a finite type and decide whether the variety  $\V(\A)$ generated by $\A$ is standard.
We also show the undecidability of some related properties. In particular, we solve a problem posed by Clark, Davey, Freese and Jackson.

We accomplish this by combining two results. The first one is Moore's result saying that there is no algorithm which takes as input a finite algebra $\A$ of a finite type and decides whether $\V(\A)$ has definable principal subcongruences.  The second is our result saying that possessing definable principal subcongruences yields possessing finitely determined syntactic congruences for varieties. The latter property is known to yield standardness.
\end{abstract}

\maketitle

\section{Introduction}

\subsection{What we prove}

Let $\V$ be a variety (an equationally defined class of algebras, see Section \ref{sec:: Tolbox} for definitions). We consider the class of $\V_{Bt}$ of Boolean topological algebras with the algebraic reducts in $\V$, and the class $V_{Bc}$ of profinite algebras (consider also as Boolean topological algebras) with the algebraic reducts in $\V$. The class $\V_{Bc}$ is called the \emph{Boolean core} of $\V$. We call $\V$ \emph{standard} provided that $\V_{Bt}=V_{Bc}$, i.e., when all Boolean topological algebras with the algebraic reducts in $\V$ are profinite. Let us formulate our main result.

\begin{theorem}\label{thm::main standard}
There is no algorithm which decides if a given finite algebra of a finite type generates a standard variety.
\end{theorem}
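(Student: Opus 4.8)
The plan is to prove Theorem~\ref{thm::main standard} by a many-one reduction from the problem that Moore showed undecidable. Moore's theorem supplies a computable assignment $M \mapsto \A_M$, sending each instance $M$ of a fixed undecidable problem to a finite algebra of finite type, with the property that $\V(\A_M)$ has definable principal subcongruences exactly when $M$ is a positive instance. It therefore suffices to show that, along this assignment, standardness and the possession of definable principal subcongruences coincide: a decision procedure for standardness would then decide the original problem, contradicting its undecidability.

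One implication is immediate from the two results combined in the abstract. If $M$ is a positive instance, then $\V(\A_M)$ has definable principal subcongruences, hence by the result established earlier it has finitely determined syntactic congruences, and the latter property is known to force standardness. This direction is purely formal: it invokes only the chain (definable principal subcongruences) $\Rightarrow$ (finitely determined syntactic congruences) $\Rightarrow$ (standard) and never inspects the internal structure of $\A_M$.

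The real work is the converse: for a negative instance $M$, where $\V(\A_M)$ fails to have definable principal subcongruences, I must show that $\V(\A_M)$ is non-standard. Here I would descend into Moore's construction. In the negative case the principal (sub)congruences occurring in $\V(\A_M)$ cannot be uniformly captured by finitely many terms, and I would convert this unbounded syntactic complexity into a Boolean topological algebra $\T$ whose algebraic reduct lies in $\V(\A_M)$ and which is compact, Hausdorff and totally disconnected, yet on which the continuous homomorphisms onto finite discrete algebras fail to separate points. Since a Boolean topological algebra is profinite precisely when such homomorphisms separate its points, $\T$ is not profinite, so it witnesses $\V(\A_M)_{Bt} \neq \V(\A_M)_{Bc}$, i.e.\ non-standardness. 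I expect $\T$ to be assembled from the infinite tower of finite quotients appearing in the negative case, topologized so that two designated points stay distinct in $\T$ while every continuous finite quotient collapses them. The main obstacle is exactly this step: one must read enough of Moore's construction to guarantee both that $\T$ genuinely belongs to $\V(\A_M)_{Bt}$ and that its finite continuous quotients identify the two designated points while the topology keeps them apart, keeping all operations continuous without admitting a separating clopen congruence --- the delicate interplay of topology and algebra where the combinatorics of $\A_M$ finally enter.
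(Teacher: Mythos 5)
Your positive direction is exactly the paper's: DPSC $\Rightarrow$ FDSC (Corollary \ref{cor::DPSC=>FDSC}, applicable since a variety is closed under homomorphic images) and FDSC $\Rightarrow$ standard (Theorem \ref{thm:: FDSC => standard}), applied to Moore's computable assignment $\mathbf T\mapsto {\sf A'}(\mathbf T)$ with the halting problem as the undecidable source. The problem is your converse. You propose to extract a non-profinite Boolean topological witness \emph{from the failure of DPSC itself}, i.e., to convert ``unbounded syntactic complexity'' into a compact witness. There is no such general principle, and it cannot be one: failure of DPSC (or even of FDSC) does not imply non-standardness. The implication FDSC $\Rightarrow$ standard is not known to reverse --- the paper explicitly poses the existence of a standard finitely generated variety without FDSC as an open problem, with a candidate already in \cite{DJMM08} --- so any argument of the shape ``no DPSC, hence non-standard'' is attempting to prove something false in general. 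The step you flag as ``the main obstacle'' is therefore not merely technically delicate; as formulated, it is the wrong route.

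What actually closes the gap is that Moore's theorem (Theorem \ref{thm::Moore}, from \cite{Moo15}) gives strictly more in the negative case than ``DPSC fails'': if $\mathbf T$ does not halt, then ${\sf V}({\sf A'}(\mathbf T))$ contains an algebra term equivalent to the concrete algebra $\mathbf Q_\omega$. One then topologizes $Q_\omega$ as the one-point compactification with condensation point $0$; compatibility of the operations is checked directly, and since $\mathbf Q_\omega$ is infinite and subdirectly irreducible it cannot be profinite (profinite algebras are residually finite, and residual finiteness would separate the monolith), so this single algebra witnesses ${\sf V}({\sf A'}(\mathbf T))_{Bt}\neq {\sf V}({\sf A'}(\mathbf T))_{Bc}$ --- this is Proposition \ref{prop::when T does not halt} together with \cite[Example 7.7]{CDFJ04}. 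Notably, the witness you describe informally (two designated points kept apart by the topology but collapsed by every continuous finite quotient) is structurally exactly what $\mathbf Q_\omega$ with this topology realizes; but it is handed to you by the second clause of Moore's theorem, not manufactured from the failure of DPSC. Replace your converse step by citing that clause and the $\mathbf Q_\omega$ analysis, and the reduction is sound.
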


The most widely applicable condition forcing standardness for varieties is possessing finitely determined syntactic congruences (FDSC) \cite{CDFJ04}. This prompted the authors of \cite{CDFJ04} to formulate the following question.

\begin{problem}[\protect{\cite[Problem 9.3]{CDFJ04}}]\label{problem::FDSC}
Is there an algorithm to decide if a given finite algebra of finite type generates a variety with finitely determined syntactic congruences?
\end{problem}   
We present the answer to this question.

\begin{theorem}\label{thm::main FDSC}
There is no algorithm which decides if a given finite algebra of a finite type generates the variety with finitely determined syntactic congruences.
\end{theorem}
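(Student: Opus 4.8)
The plan is to build a single computable many-one reduction that simultaneously settles both Theorem~\ref{thm::main standard} and the present statement, starting from the problem Moore proved undecidable: deciding whether a finite algebra generates a variety with definable principal subcongruences (DPSC). Moore's argument supplies a computable map $M\mapsto\A_M$ from the instances of an undecidable problem (say, Turing machine halting) to finite algebras of a fixed finite type, with the guarantee that $\V(\A_M)$ has DPSC exactly for the positive instances. I would feed this very family into the implications already available, keeping in view the chain of class inclusions $\mathrm{DPSC}\subseteq\mathrm{FDSC}\subseteq\mathrm{standard}$, where the first inclusion is our result that DPSC yields finitely determined syntactic congruences (FDSC) and the second is the known fact that FDSC yields standardness.

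The positive direction is immediate from the tools in hand. If $M$ is a positive instance then $\V(\A_M)$ has DPSC, so by our implication it has FDSC (and, a fortiori, is standard). No inspection of Moore's construction is needed beyond the bare conclusion of his theorem.

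The negative direction is where the work lies, and it is the step I expect to be the main obstacle. For a negative instance $\V(\A_M)$ merely lacks the \emph{stronger} property DPSC, so our implication runs the wrong way and FDSC-failure cannot be read off abstractly. The cleanest route is to target the weakest property in the chain: since FDSC yields standardness, it suffices to show that each negative instance is \emph{non-standard}, for then $\V(\A_M)$ fails FDSC as well. Concretely, I would open up Moore's construction and exploit the infinite, unbounded configuration that a non-terminating computation forces inside the algebras of $\V(\A_M)$, and use it to manufacture a Boolean topological algebra with algebraic reduct in $\V(\A_M)$ that is not an inverse limit of finite algebras, i.e.\ not profinite. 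Verifying that this topological algebra is genuinely Boolean yet escapes profiniteness -- equivalently, pinning down a subset whose syntactic congruence is not captured by any fixed finite set of terms -- is the technical heart of the argument.

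With both directions in place, the reduction reads: positive instances give varieties that are standard and have FDSC (via DPSC), while negative instances give varieties that are non-standard and therefore lack FDSC. Since $M\mapsto\A_M$ is computable and the underlying problem is undecidable, this shows that neither standardness nor FDSC is decidable for finite algebras of finite type, proving this theorem together with Theorem~\ref{thm::main standard} in one stroke.
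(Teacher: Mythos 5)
Your proposal is correct and follows essentially the same route as the paper: reduce from the halting problem via Moore's algebra ${\sf A'}(\T)$, use DPSC $\Rightarrow$ FDSC (Corollary \ref{cor::DPSC=>FDSC}) in the positive direction, and refute FDSC through non-standardness (via Theorem \ref{thm:: FDSC => standard}) in the negative direction. The step you defer as the ``technical heart'' requires no fresh analysis of Moore's construction: Theorem \ref{thm::Moore} already places an algebra term equivalent to $\mathbf Q_\omega$ in the variety when $\T$ does not halt, and this infinite subdirectly irreducible algebra equipped with its one-point-compactification topology is a Boolean topological algebra that cannot be profinite, exactly as the paper observes following \cite[Example 7.7]{CDFJ04}.
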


The standardness of $\V$  may be restated as the property that profinite algebras from $\V$ (equipped with a natural topology) may be distinguished among all Boolean topological algebras of the same type as $\V$ just by checking the satisfaction of identities defining $\V$. In particular, we can do it without referring to topology. We may then ask when this distinction can be done within a particular logic and without referring to topology. The case of first-order logic was addressed in \cite{CDJP08}. With respect to this issue, we obtain the following fact.

\begin{theorem}\label{thm::main FO}
There is no algorithm which decides if a given finite algebra of a finite type generates the variety with the Boolean core definable (relative to the class of all Boolean topological algebras of the same type as $\V$) by a set of sentences in a first-order logic.
\end{theorem}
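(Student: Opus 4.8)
The plan is to reduce Theorem~\ref{thm::main FO} to the undecidability already packaged in Theorems~\ref{thm::main standard} and~\ref{thm::main FDSC}, rather than proving it from scratch. The key observation I would exploit is a chain of implications among the three topological/logical properties attached to a variety $\V$: possessing finitely determined syntactic congruences implies standardness (this is the result of \cite{CDFJ04} cited in the introduction), and standardness — the equality $\V_{Bt}=\V_{Bc}$ — in turn implies that the Boolean core $\V_{Bc}$ is definable among all Boolean topological algebras of the same type by first-order sentences. Indeed, if $\V$ is standard, then $\V_{Bc}$ is carved out of all Boolean topological algebras of the type precisely by the identities axiomatizing $\V$, and identities are first-order sentences; so first-order definability of the Boolean core comes for free from standardness.

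First I would make the implication ``standard $\Rightarrow$ Boolean core first-order definable'' fully explicit: a Boolean topological algebra $\T$ of the type of $\V$ lies in $\V_{Bc}$ exactly when it lies in $\V_{Bt}$ (by standardness) exactly when its algebraic reduct satisfies the defining identities of $\V$. Since $\V$ is a finitely generated variety of finite type, it is generated by a finite algebra and hence is finitely axiomatizable only in special cases — but we do not need finite axiomatizability, merely a \emph{set} of first-order sentences, so the full equational theory of $\V$ suffices as the defining set of sentences. This shows every standard variety has its Boolean core first-order definable.

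Next I would invoke the hardness direction. By Theorem~\ref{thm::main FDSC}, the construction underlying Moore's result yields, uniformly and algorithmically, a family of finite algebras $\A$ for which deciding whether $\V(\A)$ has FDSC is impossible; moreover in that construction the algebras are engineered so that FDSC and standardness coincide over the relevant instances (this is exactly what lets Theorem~\ref{thm::main FDSC} and Theorem~\ref{thm::main standard} be proved together). The plan is to show that along this same family the first-order definability of the Boolean core also coincides with standardness. In one direction this is the implication of the previous paragraph. For the converse — first-order definability of the Boolean core forcing standardness (or forcing FDSC) on these instances — I would trace through the specific algebras produced by the reduction and verify that a non-standard instance cannot have its Boolean core first-order definable, typically by producing a Boolean topological algebra in $\V_{Bt}\setminus\V_{Bc}$ that is elementarily equivalent (as a topological structure, or after suitable ultrapower/compactness argument) to a genuine profinite member, thereby defeating any first-order definition.

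The main obstacle will be precisely this converse. First-order logic cannot see topology directly, and profiniteness is an inherently infinitary (inverse-limit) condition, so in general ``Boolean core first-order definable'' is strictly weaker than standardness. The crux is therefore to show that \emph{on the instances coming out of the reduction} these notions collapse, so that an algorithm for first-order definability would decide standardness (hence FDSC) and contradict Theorem~\ref{thm::main standard}. I expect this to require a compactness or elementary-equivalence argument tailored to the witnessing Boolean topological algebras constructed in the non-standard case, showing that any candidate first-order axiomatization of the Boolean core would be satisfied by a non-profinite Boolean topological algebra, and hence fail to define $\V_{Bc}$ exactly when $\V$ is non-standard.
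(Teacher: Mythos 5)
Your skeleton and the easy half do match the paper: on halting instances Moore's theorem gives DPSC, hence FDSC (Corollary \ref{cor::DPSC=>FDSC}), hence standardness (Theorem \ref{thm:: FDSC => standard}), and standardness trivially yields first-order definability of the Boolean core via the identities defining $\V$ — exactly the paper's one-line observation. But the paper does not obtain Theorem \ref{thm::main FO} by reducing to Theorems \ref{thm::main standard} and \ref{thm::main FDSC}; all three theorems rest on a single dichotomy (Theorem \ref{thm::A'(T)}), and the non-halting half of that dichotomy for the first-order statement is precisely the step you defer as ``the main obstacle''. That deferral is a genuine gap: you correctly guess that an ultraproduct/compactness argument is needed, but you neither fix the witness nor verify the hypotheses that make such an argument run. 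Concretely, when $\T$ does not halt, Theorem \ref{thm::Moore} places an algebra $\mathbf Q$ term equivalent to $\mathbf Q_\omega$ in ${\sf V}({\sf A'}(\T))$, and Proposition \ref{prop::when T does not halt} feeds $\undertilde{\mathbf Q}$, topologized as the one-point compactification with condensation point $0$, into the Second Ultraproduct Technique (Proposition \ref{prop::one-point compactification}). The entire technical content lies in verifying condition (5) there: \emph{every} model $\mathbf Q'$ of the universal theory of $\mathbf Q$ carries a compatible one-point-compactification topology. This is proved not by a generic compactness appeal but from two specific universal sentences true in $\mathbf Q_\omega$ — that $x\circ y\not\approx 0$ forces $x\not\approx 0$ and $y\not\approx 0$, and the near-injectivity of $\circ$ away from $0$ — which make the preimages of cofinite neighborhoods of $0$ cofinite, giving continuity of $\sqcap$ and $\cdot$ on every such $\mathbf Q'$. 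Your sketch contains none of this, and also slightly misstates the mechanism: the defeating structure is an ultraproduct of finite members of the Boolean core (to which \L{}o\'s's theorem applies on algebraic reducts, since the first-order sentences in question speak only of reducts), not elementary equivalence to a single profinite member.

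Moreover, your hope that first-order definability and standardness simply ``collapse on the instances'' cannot be waved through, because the implication is not reversible in general: the paper itself recalls a non-standard universal Horn class generated by a finite lattice whose Boolean core \emph{is} first-order axiomatizable (\cite[Example 4.3]{CDJP08}). So the collapse is a special property of $\mathbf Q_\omega$ that must be established, and establishing it is exactly the content of Proposition \ref{prop::when T does not halt}. Without that proposition (or an equivalent analysis of the witness algebra), your argument yields only the two undecidability statements you already assumed, not Theorem \ref{thm::main FO}.
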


\subsection{How we prove it}

A general strategy for Problem \ref{problem::FDSC} was already proposed in \cite{CDFJ04}. It is based on McKenzie's construction \cite{McK96}. It effectively assigns to each Turing machine $\mathbf T$ the algebra $\sf A(\mathbf T) $ such that $\mathbf T$ halts iff there is a finite bound on the cardinality of subdirectly irreducible algebras in the variety ${\sf V}(\sf A(\mathbf T))$ generated by $\sf A(\mathbf T)$. In fact, if $\T$ does not halt, then, up to term equivalence, the particular subdirectly irreducible algebra $\mathbf Q_\omega$ (described in Section \ref{sec:: undecidability}) is in ${\sf V}(\sf A(\mathbf T))$. The algebra $\mathbf Q_\omega$ admits a compatible Boolean topology. Thus $\mathbf Q_\omega$ with this topology belongs to ${\sf V}({\sf A}(\T))_{Bt}$ and does not belong to ${\sf V}({\sf A}(\T))_{Bc}$. Hence it witnesses that ${\sf V}(\sf A(\mathbf T))$ is not standard. In particular, ${\sf V}(\sf A(\T))$ does not have FDSC \cite[Example 7.7]{CDFJ04}. 

What remains to be proved is that ${\sf V}(\sf A(\T))$ has FDSC if $\T$ halts. However, recently Moore verified that it is not true \cite{Moo16}. 
In the same paper he also showed that ${\sf V}(\sf A(\T))$ does not have  definable principal subcongruences (DPSC) (see Section \ref{sec::TFPSC} for the definition). This property was invented in order to give the most elegant proof  of Baker's  finite basis theorem \cite{BW02} saying that every finitely generated congruence-distributive variety of a finite type is finitely axiomatizable \cite{Bak77}. (The idea of Baker and Wang was subsequently used also in the context of quasivariety \cite{NS09} and of deductive systems \cite{NS13}.)

In  \cite{Moo15} Moore managed to modify the algebra $\sf A(\T)$ by adding one basic operation. The resulting algebra $\sf A'(\T)$ allows him to obtain the following fact: ${\sf V}({\sf A'}(\T))$ has DPSC iff $\T$ halts. Consequently, he proved that the problem whether the variety generated by a given finite algebra of finite type has DPSC is undecidable.  In \cite{Moo16} he suggested that the construction of $\sf A'(\T)$ would be used in Problem \ref{problem::FDSC}. Indeed, again, up to term equivalence, the algebra $\mathbf Q_\omega$ belongs to ${\sf V}({\sf A'}(\T))$.  The author also wrote that a detailed analysis of polynomials in ${\sf V}({\sf A'}(\T))$ might be needed in order to show that ${\sf V}({\sf A'}(\T))$ has FDSC when $\T$ halts.

It led us to consider an idea that having DPSC may yield having FDSC in general. If it is true, then actually no additional analysis of the variety ${\sf V}({\sf A'}(\T))$ is needed. Let us look at both properties from a common perspective of defining principal congruences. The principal congruence $\theta(a,b)$ generated by a pair $(a,b)$ in an algebra $\A$ is a transitive closure of the relation $R$ which is the carrier of the subalgebra $\mathbf R$ of $\A^2$ generated by $\{(a,b),(b,a)\}\cup \{(c,c)\mid c\in A\}$. This fact may be expressed in the following way: a pair $(c,d)$ is in $\theta(a,b)$  iff there is a \emph{congruence formula} (see Section \ref{sec:: Tolbox} for formal definitions) witnessing it. These are two dimensional objects. They have {\it length} which expresses how many compositions of the relation $R$ we have to use. They also have {\it depth} which expresses, roughly speaking, how complex terms we have to use when generating $\mathbf R$. A variety $\V$ has  definable principal congruences (DPC) if there is a finite bound on length and  depth in a whole $\V$. There are many ways to weaken this property. It appears that $\V$ has FDSC exactly when there is a finite bound only on depth (we assume that the type of $\V$ is finite) and length is arbitrary. And $\V$ has DPSC when we have length and depth bounded by a finite number just for \emph{some} principal congruences. 

At first glance, it seems that the properties of having FDSC and DPSC are incomparable. The group $\mathbf S_3$ of all permutation on a three element set generates the variety with FDSC (every variety of groups has FDSC) and without DPSC \cite{BW02}. Also, every finite non-distributive lattice generates the variety without DPC \cite{McK78} but, by the main result in \cite{DJMM08,Wan90} (see also our Corollary \ref{fg+cd=>FDSC}), with FDSC. But an example of a variety with DPSC and without FDSC is unknown. 

The main technical contribution of this paper is the introduction of a common weakening of the properties of  having FDSC and having DPSC. When a variety has this property, we say that it has term finite principal subcongruences (TFPSC).  We show that having TFPSC is equivalent to having FDSC for varieties (Theorem \ref{thm:: FDPC <=> TFPSC}). 
\[
\xymatrix@=15pt@M=9pt{
 & \rm DPC  \ar@{=>}[dl]  \ar@{=>}[dr] & \\
\rm FDSC  \ar@{=>}[dr] & & \rm DPSC  \ar@{=>}[dl] \\
& \rm TFPSC & 
}
\quad\text{ \raisebox{-37pt}{colapses to} }
\xymatrix@=15pt@M=9pt{
  \rm DPC  \ar@{=>}[d]   \\
\rm  \rm DPSC  \ar@{=>}[d] \\
 \;\;\,\rm FDSC \Leftrightarrow TFPSC 
}
\]

Thus it is indeed the case that having DPSC yields having FDSC. This fact, Moore's result and the analysis of $\mathbf Q_\omega$ lead to a negative answer for the question in Problem \ref{problem::FDSC}. Thus Theorems \ref{thm::main standard} and \ref{thm::main FDSC} hold. A bit of additional work on algebra $\mathbf Q_\omega$, based on a technique from \cite{CDJP08}, leads to the conclusion that the Boolean core of ${\sf V}({\sf A'}(\T))$ is not axiomatizable in first-order logic when $\T$ does not halt. This gives Theorem \ref{thm::main FO}.



\subsection{Why we prove it; A bit of history}

Let $\undertilde\A$ be a finite structure (a set with operations and relations of finite arities) equipped with a discrete topology. The class ${\sf IS_CP^+}(\undertilde\A)$ of all isomorphic images of closed subalgebras of non-zero direct powers of $\undertilde\A$ is called the \emph{topological prevariety generated by $\undertilde\A$}. Such classes appear in duality theory as dual categories \cite{CD98,Dav93}. For example, in Stone duality for Boolean algebras as $\A$ we take a two element set (no operations and relations) and then ${\sf IS_CP^+}(\undertilde\A)$ is the class of Boolean topological spaces (aka Stone spaces) \cite{Sto36}. In Pristley duality for bounded distributive lattices as $\A$ we take a two element linearly ordered set  and then ${\sf IS_CP^+}(\undertilde\A)$ is the class of Pristley spaces 
\cite{Pri70}.
In Pontryagin duality restricted to abelian groups of exponent at most $m$ we may take the group $\mathbb Z_m$ of integers modulo $m$. Then ${\sf IS_CP^+}(\undertilde{\mathbb Z_m})$ is the class of Boolean topological abelian groups of exponent at most $m$ \cite{Pon66}.

It is a convenient situation when we have a {\it good} description of members of ${\sf IS_CP^+}(\undertilde\A)$. A general scheme, based on the idea from \cite{Tay77}, for axiomatizations of topological varieties was given in \cite{CK84}. However, it involves infinite, possibly difficult to work with, expressions. They refer to topological properties of the structures under considerations. Consequently, such axiomatizations are not considered as {\it good}. The property of standardness, introduced in  \cite{CDHPT03} was intended as a formalization of the idea of good behavior for topological prevarieties with respect to the axiomatization. 

A  theory for standardness was developed in \cite{CDFJ04}, and the following general sufficient condition was presented. (Standardeness was defined there as the property for topological prevarieties, not for quasivarieties.) 

\begin{theorem}[\protect{\cite[FDSC-HSP Theorem 4.3]{CDFJ04}}]\label{thm::FDSC-HSP}
Let $\mathcal Q$ be the quasivariety (a universal Horn class with a trivial algebra) generated by a finite algebra. If  $\mathcal Q$ is a variety with FDSC, then $\mathcal Q$ is standard
\end{theorem}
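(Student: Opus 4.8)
The plan is to unwind the definition of standardness: given any Boolean topological algebra $\A$ whose algebraic reduct lies in $\mathcal Q$, I must show $\A$ is profinite. I would first record the topological characterization that does the heavy lifting: a compact (Boolean) topological algebra is profinite exactly when its clopen congruences of finite index separate points. Indeed, the finite-index clopen congruences $\theta$ form a directed family (closed under intersection), each quotient $\A/\theta$ is finite and discrete, and the canonical map $\A\to\varprojlim\A/\theta$ is a continuous homomorphism; if the $\theta$'s separate points it is injective, and being a map from a compact space to a Hausdorff space with dense image it is a homeomorphism and hence an isomorphism. So the whole problem reduces to manufacturing enough finite-index clopen congruences. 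The natural candidates are the syntactic congruences $\syn(X)$ of clopen sets $X\subseteq A$. These already separate points: if $a\ne b$, pick a clopen $X$ with $a\in X\not\ni b$, and the identity polynomial witnesses $(a,b)\notin\syn(X)$. The real content is therefore to show that each $\syn(X)$ is \emph{clopen of finite index}.

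This is where FDSC enters. Recall $(a,b)\in\syn(X)$ iff $p(a)\in X\Leftrightarrow p(b)\in X$ for every unary polynomial $p$; since $X$ is clopen and the basic operations are continuous, each relation $\{(a,b): p(a)\in X\Leftrightarrow p(b)\in X\}$ is clopen, so $\syn(X)$ is at least \emph{closed}, but a priori only an intersection of infinitely many such relations. FDSC supplies a bound $n$ (depending only on $\mathcal Q$, and applicable to the abstract reduct of $\A$) such that $\syn(X)$ is already determined by the unary polynomials whose underlying term has depth at most $n$. Because the type is finite, there are only \emph{finitely many} term templates $t(x,y_1,\dots,y_k)$ of depth at most $n$; the parameters $\bar c=(c_1,\dots,c_k)$ still range over all of $A$, but the list of templates is now finite.

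The crux is to convert the universal quantifier over the infinitely many parameters into openness, and here compactness closes the gap. For a fixed template $t$, consider the set of disagreement-witnesses
\[
W_t=\{(a,b,\bar c)\in A^2\times A^k : t(a,\bar c)\in X \text{ and } t(b,\bar c)\notin X,\ \text{or conversely}\}.
\]
Continuity of the operations together with clopenness of $X$ makes $W_t$ clopen in the compact space $A^2\times A^k$, hence compact; its image under the projection $\pi\colon A^2\times A^k\to A^2$ is therefore compact, hence closed. Consequently $S_t:=A^2\setminus\pi(W_t)=\{(a,b):\forall\bar c,\ t(a,\bar c)\in X\Leftrightarrow t(b,\bar c)\in X\}$ is \emph{open}. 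Intersecting over the finitely many templates gives $\syn(X)=\bigcap_t S_t$, a finite intersection of open sets, so $\syn(X)$ is open as well, and being also closed it is clopen. A clopen congruence on a compact space has all blocks clopen, hence finite index, and the finite quotient $\A/\syn(X)$ lies in $\mathcal Q$ since $\mathcal Q$ is a variety and thus closed under homomorphic images.

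Assembling the pieces: the family $\{\syn(X):X\ \text{clopen}\}$ consists of finite-index clopen congruences and separates points, so by the reduction in the first paragraph $\A\cong\varprojlim\A/\syn(X)$ is profinite, whence $\mathcal Q_{Bt}=\mathcal Q_{Bc}$ and $\mathcal Q$ is standard. I expect the main obstacle to be precisely the penultimate step: the description of $\syn(X)$ is a conjunction over infinitely many parameter tuples, and naively this yields only a closed relation. The point forcing it open — projecting a compact witness set and taking complements — is the one genuinely topological move, and it is exactly the finiteness of the template set granted by FDSC (via the finite type and the depth bound) that keeps the resulting intersection finite and hence open.
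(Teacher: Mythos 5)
Your proposal is correct and follows essentially the same route as the paper: the paper reduces the statement to Proposition~\ref{prop::FDSC=>profiniteness} (FDSC forces every Boolean topological algebra with reduct in $\V$ to be profinite, cited from \cite[Theorem 2.13]{CDJP08}, originally the Clopen Equivalence Lemma of \cite{CDFJ04}) together with Fact~\ref{fact:: standardness for varieties}, and your compactness/projection argument showing each $\syn(X)$ is clopen of finite index is exactly the standard proof of that cited proposition, while your closing use of closure under homomorphic images to place the finite quotients in $\mathcal Q$ is the content of Fact~\ref{fact:: standardness for varieties}. In short, you have correctly reconstructed the proof the paper outsources to its references.
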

Stone duality and restricted Pontryagin duality fall into the scope of this theorem. But Pristley duality, even if we extend this theorem to relational case, does not \cite{Str80}. We will use a slight extension of the FDSC-HSP theorem, see Theorem \ref{thm:: FDSC => standard}.

In \cite{CDFJ04} a main problem in this area was formulated.

\begin{problem}[\protect{\cite[Problem 9.1]{CDFJ04}}]\label{problem::standardness}
Is there an algorithm to decide if a given finite algebra of finite type generates a standard universal Horn class?
\end{problem}

Problem \ref{problem::standardness} seems to be very difficult. It is open even when we restrict it to finite lattices \cite[Problem 2]{CDJP08}. Thus the authors of \cite{CDFJ04} also formulated a simpler Problem \ref{problem::FDSC}, together with some hints how to approach it.

In \cite{CDJP08} two extensions were made. Firstly, not only finitely generated, but also topological prevarieties generated by sets of finite structures are considered. Secondly, a weakening of standardness to first-order axiomatizations, as still {\it good} description, was proposed. General techniques for disproving standardness and first-order axiomatization were presented. In particular, it appears that Pristley duality is as bad as it can be: the topological prevariety of Pristley spaces in not first-order axiomatizable \cite[Example 6.2]{CDJP08}. An analogous problem to Problem \ref {problem::standardness} for first-order aximatizability is formulated \cite[Problem 1]{CDJP08}. It is also suggested that standardness may yield finite aximatizability for finitely generated universal Horn classes. There are known standard finitely generated varieties which are not finitely axiomatizable (for instance of semigroups). However, such universal Horn classes are unknown \cite[Problem 3]{CDJP08}. Note  that the converse implication does not hold. The class of ordered sets is not standard \cite{Str80}. Moreover, a non-standard finitely axiomatizable universal Horn class generated by a four-element algebra in presented in \cite[Section 5]{Jac08}.

\vspace{1em}
We would like to point out the following quotation of Johnstone  \cite[Subsection VI.2.6]{Joh86} 
\begin{quote}
The question thus arises: given an algebraic category $\bf A$, when
can we say that every Stone topological $\bf A$-algebra is profinite? It seems
hard to give a simple condition on $\bf A$ which is both necessary and sufficient;
\end{quote}

The meaning of the adjective {\it simple} is unclear. But one could argue that simplicity should yield decidability for varieties presented in a finitary way. Thus Theorem \ref{thm::main standard} confirms Johnstone's supposition in case of presenting a variety by a finite generating algebra. Yet, there is another finitary way of presenting varieties: by a finite set of defining identities. For this case Jackson proved an analogous theorem.

\begin{theorem}[\protect{\cite[Theorem 6.1]{Jac08}}]
There is no algorithm which decides if a given finite set of identities defines a standard variety or a variety with FDSC.
\end{theorem}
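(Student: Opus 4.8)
The plan is to reduce the halting problem to both decision problems at once, by effectively assigning to each Turing machine $\T$ a \emph{finite set of identities} $\Sigma_\T$ so that the variety $\V_\T$ defined by $\Sigma_\T$ has FDSC, equivalently is standard, precisely when $\T$ halts. Since having FDSC implies standardness (Theorem \ref{thm:: FDSC => standard}), it suffices to arrange that $\V_\T$ has FDSC whenever $\T$ halts and that $\V_\T$ is not standard whenever $\T$ diverges. On the range of the reduction the two properties then coincide, each being equivalent to halting, so an algorithm deciding either one would decide halting. This is the same scheme as in the proofs of Theorems \ref{thm::main standard} and \ref{thm::main FDSC}; the new feature is that $\V_\T$ is now presented by finitely many identities rather than by a finite generating algebra.

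For the divergent case I would reuse the witness $\mathbf Q_\omega$ from Section \ref{sec:: undecidability}, designing $\Sigma_\T$ so that a copy of $\mathbf Q_\omega$ (up to term equivalence) lies in $\V_\T$ whenever $\T$ does not halt. As $\mathbf Q_\omega$ carries a compatible Boolean topology but is not profinite, it belongs to $(\V_\T)_{Bt}$ but not to $(\V_\T)_{Bc}$, so $\V_\T$ is not standard, and hence, by Theorem \ref{thm:: FDSC => standard}, lacks FDSC.

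For the halting case I would show that $\V_\T$ has FDSC. By Theorem \ref{thm:: FDPC <=> TFPSC} this amounts to bounding, uniformly over $\V_\T$, the depth of the congruence formulas needed to define principal congruences; equivalently, I could establish DPSC (Section \ref{sec::TFPSC}) and invoke the collapse DPSC $\Rightarrow$ FDSC. In either form the bound should be read off from the length of the halting computation of $\T$, just as one controls DPSC for ${\sf V}({\sf A}'(\T))$ in the finitely generated setting.

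The main obstacle is the finite axiomatizability itself. In the finitely generated case one is given the algebra ${\sf A}'(\T)$ and never needs to know whether $\V({\sf A}'(\T))$ is finitely based; here the entire infinite variety is pinned down by the single finite set $\Sigma_\T$, and one must write identities that both force $\mathbf Q_\omega\in\V_\T$ when $\T$ diverges and, when $\T$ halts, admit no further Boolean topological algebra outside the Boolean core. I would attack this either by proving that Moore's varieties $\V({\sf A}'(\T))$ are finitely based and computing a basis $\Sigma_\T$ from $\T$, or, following Jackson's approach in \cite{Jac08}, by translating the computation into equational form directly, encoding the transition structure of $\T$ together with the defining identities of $\mathbf Q_\omega$, so that the resulting finite equational theory exhibits the two required extremal behaviours. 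Verifying that no unexpected non-profinite algebra survives in the halting case is where the real difficulty lies.
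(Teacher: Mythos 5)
Note first that the paper you are working from does not prove this statement at all: it is quoted from Jackson \cite[Theorem 6.1]{Jac08} purely for contrast with Theorems \ref{thm::main standard} and \ref{thm::main FDSC}, which concern varieties presented by a finite generating algebra rather than by a finite set of identities. So the comparison must be with Jackson's own argument. Your overall architecture does match it (and the paper's): one effective reduction $\T\mapsto\Sigma_\T$ arranged so that on its range FDSC and standardness coincide, with FDSC in the halting case, a $\mathbf Q_\omega$-style Boolean topological witness (via one-point compactification) in the divergent case, and the chain FDSC $\Rightarrow$ standard (Theorem \ref{thm:: FDSC => standard}) making a single reduction serve both decision problems at once. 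That is the right shape.

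As a proof, however, there is a genuine gap: $\Sigma_\T$ is never constructed, and neither of your two proposed routes closes it. The route via computing a finite basis of Moore's $\V({\sf A'}(\T))$ is unworkable: nothing guarantees these varieties are finitely based --- machine-based constructions of exactly this kind are what McKenzie used to show Tarski's finite basis problem undecidable --- and if $\V({\sf A'}(\T))$ is not finitely based in the divergent case, then no finite $\Sigma_\T$ axiomatizes it. If you instead let $\Sigma_\T$ be a finite fragment of the equational theory of ${\sf A'}(\T)$, the divergent case survives (the fragment still holds in the copy of $\mathbf Q_\omega$), but the halting case breaks: $\operatorname{Mod}(\Sigma_\T)$ is strictly larger than $\V({\sf A'}(\T))$, and Moore's DPSC bound gives no control over the new algebras, whereas FDSC (equivalently TFPSC, Theorem \ref{thm:: FDPC <=> TFPSC}) must be verified over the \emph{entire} finitely axiomatized variety --- this is precisely the ``no unexpected non-profinite algebra survives'' difficulty you flag in your last sentence. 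Your other route, ``follow Jackson's approach,'' is an appeal to the very result being proved: the substantive content of \cite{Jac08} --- a translation of machine computations into a finite equational presentation (via his flat-algebra translation of universal Horn logic into equational logic) together with a direct verification of FDSC for the whole finitely based variety in the halting case --- is exactly the part your sketch leaves open. In short: correct scaffolding, honest identification of the hard step, but the hard step is the theorem.
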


\section{Toolbox}
\label{sec:: Tolbox}
Here we provide the required definitions and facts. If necessary, the reader may consult \cite{BS81} for notions in universal algebra and \cite{CDJP08,Joh86} for notions on topological prevariety (note that adopt the terminology from \cite{CDJP08}.

\subsection{Algebras, topological algebras and Boolean cores}
Let us fix an arbitrary finite algebraic type $\sigma$, i.e., a finite set of symbols of basic operations with ascribed arities. Except Section \ref{sec:: undecidability}, all (topological) algebras will be of type $\sigma$. 

A \emph{universal Horn class} of algebras is a class definable by (first-order) universal Horn sentences, i.e., universal sentences without occurrences of conjunction and with at most one positive literal. Equivalently, a class of algebras is a universal Horn class iff it is closed under the formation of isomorphic images, subalgebras and direct products over a nonempty indexing sets  \cite[Theorem V.2.23]{BS81}. Our results refers to classes closed under homomorphic images, in particular to varieties. A \emph{variety} is a class of algebras definable by \emph{identities}, i.e., sentences which are universally quantified equations. Equivalently, a class of algebras is a variety iff it is closed with respect to the formation of homomorphic images, subalgebras and direct products over arbitrary indexing sets  \cite[Theorem II.11.9]{BS81}.

A \emph{topological} algebra $\undertilde\A$ is a pair $(\A,{\mathcal T})$ where $\A$ is an algebra of type $\sigma$ and ${\mathcal T}$ is a topology such that every basic operation $\omega\colon\A^n\to\A$ is a continuous map with respect to topological spaces $(A,{\mathcal T})$ and its power $(A,{\mathcal T})^n$ (we also say that $\mathcal T$ is \emph{compatible} with basic operations of $\A$). 

In this paper all considered topological spaces are \emph{Boolean}. This means that they are Hausdorff, compact and totally disconnected. Topological algebras with such topologies are called \emph{Boolean topological algebras}. Most commonly encountered Boolean topological algebras are profinite (groups, rings, semigroups, distributive lattices, Heyting algebras, closure algebras, all of them have FDSC \cite{CDFJ04}). It means that they are isomorphic to inverse limits of finite algebras. However, not all Boolean topological algebras are profinite. For instance, one may take an infinite subdirectly irreducible algebra (i.e., having a least congruence which is not the identity relation) admitting a Boolean topology. An example of such unary algebra is given in \cite[Point VI.2.5]{Joh86} and of modular lattice  in \cite{Cli79}, see also \cite[Example 2.10]{CDJP08}. One such example, presented in Section \ref{sec:: undecidability}, will be crucial for us.

Following \cite{CDJP08}, we call a class of Boolean topological algebras which is closed under the formation of isomorphic images, topologically closed subalgebras and direct products over a nonempty indexing set a \emph{topological prevariety}.  (The fact that we disallow empty indexing set is not relevant here. Indeed, the difference is only with adding or excluding a trivial algebra. It has its origin in duality theory, where this definition is natural.)

For every class $\mathcal K$ of Boolean topological algebras there exists a smallest topological previariety containing $\mathcal K$. It consists of topological algebras which are isomorphic to closed subalgebras of non-zero products of members of $\mathcal K$. 

With a given universal Horn class  $\H$ (in particular, with a variety) we associate two topological prevarieties: The first one, denoted by $\H_{Bt}$ consists of all Boolean topological algebras with the algebraic reducts in $\H$. The second one, denoted by $\H_{Bc}$ and called the \emph{Boolean core of} $\H$, is the topological prevariety generated by finite members of $\H$, each of them considered as a Boolean topological algebra with the discrete topology. In general, we have the inclusion $\H_{Bc}\subseteq\H_{Bt}$. We say that $\H$ is \emph{standard} if $\H_{Bt}=\H_{Bc}$. 

It appears that the members of $\H_{Bc}$ are exactly those Boolean topological algebras which are isomorphic to inverse limits of finite algebras from $\H$ \cite[Corollary 2.4]{CDJP08}. Moreover,
profinite algebras are exactly those algebras which are isomorphic to inverse limits of its finite homomorphic images.
Thus, since varieties are closed under the formation of (finite) homomorphic images, we have the following fact.

\begin{fact}
\label{fact:: standardness for varieties}
Let $\V$ be a variety. Then $\V_{Bc}$ consists of all profinite algebras which have the algebraic reducts in $\V$. Consequently, $\V$ is standard if and only if every Boolean topological algebra with the algebraic reduct in $\V$ is profinite. 
\end{fact}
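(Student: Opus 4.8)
The plan is to prove the first assertion—that $\V_{Bc}$ is exactly the class of profinite algebras whose algebraic reduct lies in $\V$—and then to read off the characterization of standardness as an immediate consequence. Two facts recalled just above the statement do essentially all the work: that the members of $\V_{Bc}$ are precisely the Boolean topological algebras isomorphic to inverse limits of finite algebras from $\V$, and that the profinite algebras are precisely those isomorphic to inverse limits of their own finite homomorphic images. I would establish the equality of the two classes by a double inclusion.

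For the inclusion $\V_{Bc}\subseteq\{\text{profinite algebras with algebraic reduct in }\V\}$, take $\undertilde\A\in\V_{Bc}$, so that $\undertilde\A$ is isomorphic to an inverse limit of finite algebras belonging to $\V$. Being an inverse limit of finite algebras, $\undertilde\A$ is profinite by definition. Its algebraic reduct is an inverse limit, hence a subalgebra of a product, of algebras from $\V$; since $\V$ is closed under subalgebras and products, this reduct lies in $\V$.

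For the reverse inclusion I would invoke the homomorphic-image characterization of profiniteness, and this is the only point that requires attention. Let $\undertilde\A$ be profinite with algebraic reduct in $\V$. Then $\undertilde\A$ is isomorphic to an inverse limit of its finite homomorphic images. Each such finite homomorphic image is a homomorphic image of an algebra in $\V$, so, $\V$ being closed under $\mathbf H$, it again belongs to $\V$. Thus $\undertilde\A$ is an inverse limit of finite algebras from $\V$, whence $\undertilde\A\in\V_{Bc}$ by the recalled characterization of the Boolean core. The crux is precisely that varieties, unlike arbitrary universal Horn classes, are closed under homomorphic images, which is what guarantees that the finite quotients witnessing profiniteness stay inside $\V$.

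Finally, the consequence is formal. Since the inclusion $\V_{Bc}\subseteq\V_{Bt}$ always holds and $\V_{Bt}$ consists of all Boolean topological algebras with algebraic reduct in $\V$, the equality $\V_{Bt}=\V_{Bc}$ defining standardness holds if and only if every member of $\V_{Bt}$ already lies in $\V_{Bc}$, that is, by the first assertion, if and only if every Boolean topological algebra with algebraic reduct in $\V$ is profinite.
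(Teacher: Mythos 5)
Your proof is correct and follows exactly the route the paper intends: the paper justifies this Fact in one line by combining the characterization of $\V_{Bc}$ as inverse limits of finite algebras from $\V$, the characterization of profinite algebras as inverse limits of their finite homomorphic images, and closure of varieties under homomorphic images, which is precisely your double-inclusion argument spelled out. Your identification of $\mathbf H$-closure as the crux (and why the statement is given for varieties rather than arbitrary universal Horn classes) matches the paper's own emphasis.
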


We say that a topological prevariety $\mathcal G$ is \emph{first-order  axiomatizable} if there exists a set $S$ of first-order sentences such that 
$\mathcal G$ consists of all Boolean topological algebras of type $\sigma$ with algebraic reducts satisfy all sentences from $S$. We are interested in the existence of first-order axiomatization of Boolean cores. Clearly, if $\H$ is a standard universal Horn class, then $\H_{Bc}$ is first-order axiomatizable by any set of first-order sentences defining $\H$. However, there are non-standard universal classes with the first-order axiomatizable Boolean core. Such an universal Horn class generated by a finite lattice is presented in \cite[Example 4.3]{CDJP08}).
We will need the following fact. Recall that an algebra is \emph{locally finite} if every its finitely generated subalgebra is finite. In this paper by a \emph{one-point compactification} we mean a  topological space $(A,{\mathcal T})$ with a distinguish element $0$, where  
\[
{\mathcal T}=\{B\in {\mathcal P}(A)\mid 0\in B\text{ and } A-B \text{ is finite}\}\cup {\mathcal P}(A-\{0\}).
\]
Here $\mathcal P(X)$ denotes the powerset of $X$. The point $0$ is called then the \emph{condensation point} of $(A,{\mathcal T})$. Note that $(A,{\mathcal T})$ is simply the Alexandrov compactification of the discrete spaces on the set $A-\{0\}$. 

\begin{proposition}[\protect{This is a special case of \cite[Second Ultraproduct Technique 5.3]{CDFJ04}}] 
\label{prop::one-point compactification}
Let $\undertilde\A=(\A,{\mathcal T})$ be a Boolean topological algebra such that
\begin{enumerate}
\item $\A$ is locally finite,
\item $\A$ has a constant $0$,
\item $(A,{\mathcal T})$ is a one-point compactification with the condensation point $0$,
\item $\undertilde\A\in \V_{Bt}-\V_{Bc}$,
\item for every algebra $\B$ of the same type as $\A$, if $\B$ is a model of the universal theory of $\A$ and  $(B,{\mathcal T}')$ is the one-point compactification with the condensation point $0$, then the topology ${\mathcal T}'$ is compatible on $\B$ {\rm (}i.e, $(\B,{\mathcal T}')$ is a Boolean topological algebra{\rm )}. 
\end{enumerate}
Then $\V_{Bc}$ is not first-order axiomatizable.
\end{proposition}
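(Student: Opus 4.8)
The plan is to argue by contradiction, using an ultraproduct of the finite subalgebras of $\A$ in the spirit of the Second Ultraproduct Technique. Assume $\V_{Bc}$ were first-order axiomatizable. Then it is axiomatized by $\Theta$, the set of all first-order sentences holding in the algebraic reducts of all members of $\V_{Bc}$ (any first-order axiomatization is equivalent over Boolean topological algebras to $\Theta$), so a Boolean topological algebra lies in $\V_{Bc}$ exactly when its reduct models $\Theta$. Because $\A$ is locally finite (1), its finite subalgebras $\{A_i\}_{i\in I}$, ordered by inclusion, form a directed system with union $\A$; each $A_i$ is a finite algebra in $\V$, hence, as a discrete space, a member of $\V_{Bc}$, so $A_i\models\Theta$. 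Fix an ultrafilter $\mathcal U$ on $I$ containing every set $\{i:a\in A_i\}$ (these have the finite intersection property) and put $\B=\prod_i A_i/\mathcal U$. By {\L}o\'{s}'s theorem $\B\models\Theta$, and since each $A_i$ is a subalgebra of $\A$ it satisfies the universal theory of $\A$, whence $\B$ does too. Hypothesis (5), together with the constant $0$ from (2), then makes the one-point compactification $\mathcal T'$ of $\B$ with condensation point $0$ compatible, so $(\B,\mathcal T')$ is a Boolean topological algebra. As $\B\models\Theta$, the axiomatization assumption forces $(\B,\mathcal T')\in\V_{Bc}$, i.e.\ $\B$ profinite. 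The whole proof thus reduces to showing that $\B$ is \emph{not} profinite.

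Next I set up the transfer. Define $\iota\colon\A\to\B$ by sending $a$ to the class of the tuple whose $i$-th entry is $a$ whenever $a\in A_i$ and $0$ otherwise; the choice of $\mathcal U$ makes $\iota$ a well-defined algebra embedding with $\iota(0)=0$. Crucially, $\iota$ is continuous from $(\A,\mathcal T)$ to $(\B,\mathcal T')$: every open set of $\mathcal T'$ avoiding the condensation point pulls back into $A-\{0\}$, which is open in the one-point compactification $\mathcal T$ (3); and every basic neighbourhood of $0$ in $\mathcal T'$ is cofinite, so, $\iota$ being injective, its preimage is a cofinite neighbourhood of $0$ in $\mathcal T$.

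The heart of the matter is transferring non-profiniteness of $\A$ to $\B$. Recall the standard criterion: a Boolean topological algebra is profinite iff its finite-index clopen congruences separate points (injectivity of the map into the inverse limit of finite quotients is the only non-automatic condition, the rest following from compactness). By (4), $\A$ is not profinite, so there are $u\neq v$ in $\A$ lying in every finite-index clopen congruence of $(\A,\mathcal T)$; then $\iota(u)\neq\iota(v)$. Let $f\colon(\B,\mathcal T')\to F$ be any continuous homomorphism onto a finite discrete algebra. Then $f\circ\iota\colon(\A,\mathcal T)\to F$ is a continuous homomorphism onto a finite discrete algebra, so its kernel is a finite-index clopen congruence of $\A$ and hence contains $(u,v)$; thus $f(\iota(u))=f(\iota(v))$. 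So no finite continuous quotient of $(\B,\mathcal T')$ separates $\iota(u)$ from $\iota(v)$, the finite-index clopen congruences of $\B$ fail to separate points, and $(\B,\mathcal T')$ is not profinite — the desired contradiction.

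The main obstacle is exactly this final transfer, and it hinges on two points: using the correct profiniteness criterion (separation by finite-index \emph{clopen} congruences, not arbitrary finite-index ones, since under a one-point compactification a finite-index congruence is clopen precisely when the block of $0$ is cofinite), and verifying that $\iota$ is a topological embedding, so that finite continuous quotients of $\B$ restrict to genuine finite continuous quotients of $\A$. Everything else — local finiteness yielding the embedding into an ultraproduct of finite subalgebras, {\L}o\'{s}'s theorem for both $\Theta$ and the universal theory of $\A$, and hypothesis (5) supplying compatibility — is routine once these two points are secured.
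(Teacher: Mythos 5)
Your proof is correct, and its scaffolding is exactly the cited Second Ultraproduct Technique, which the paper itself does not reprove but defers to \cite{CDFJ04}: replacing an arbitrary axiomatization by the full theory $\Theta$ of the reducts of members of $\V_{Bc}$, embedding the locally finite $\A$ via $\iota$ into an ultraproduct $\B$ of its finite subalgebras (each of which lies in $\V_{Bc}$ since the reduct of $\undertilde\A$ is in $\V$), invoking {\L}o\'{s} for both $\Theta$ and the universal theory of $\A$, and using hypothesis (5) to make $(\B,\mathcal T')$ a Boolean topological algebra satisfying the putative axioms. Where you genuinely diverge is the ending: you rule out $(\B,\mathcal T')\in\V_{Bc}$ by transferring non-profiniteness through finite continuous quotients, via the separation-by-clopen-finite-index-congruences characterization of profiniteness. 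All steps there check out (both directions of the criterion are used, both hold, and your side remark about which finite-index congruences are clopen under a one-point compactification is accurate), but this is heavier machinery than needed. A shorter ending follows from facts already in the paper's toolbox: your $\iota$ is a continuous injection from a compact space into a Hausdorff space, hence automatically a homeomorphism onto its image, and $\iota(A)$ is topologically closed in $(B,\mathcal T')$ simply because it contains the condensation point $0$ --- its complement lies inside $B-\{0\}$ and is therefore open. Thus $\undertilde\A$ is isomorphic to a topologically closed subalgebra of $(\B,\mathcal T')$, and since $\V_{Bc}$ is a topological prevariety, closed under isomorphic images and topologically closed subalgebras, $(\B,\mathcal T')\in\V_{Bc}$ would force $\undertilde\A\in\V_{Bc}$, contradicting (4) outright, with no profiniteness criterion and no appeal to Fact \ref{fact:: standardness for varieties}. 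What your route buys in exchange is a weaker requirement on $\V_{Bc}$ (only that its members are profinite, not the prevariety closure properties), at the cost of proving or citing the separation criterion; both are legitimate proofs of the proposition.
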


\subsection{Determining  principal congruences}

Let us fix a denumerable set $X=\{x,p_0,p_1,p_2,\ldots\}$ of variables. We consider $x$ as a \emph{distinguished}  variable. Let us also fix one  set 
 of terms over $X$ satisfying the following conditions
\begin{itemize}
\item $x\in T_x$,
\item if $\omega$ is a basic operation of positive arity $n$,  then \\ $\omega(p_0,\ldots,p_{i-1},x,p_{i+1},\ldots, p_{n-1})\in T_x$,
\item if $s(x,\bar p), t(x,\bar q)\in T_x$, then $s(t(x,\bar q),\bar p)\in T_x$.
\end{itemize}

From the perspective of this paper it is not relevant which set $T_x$ satisfying the listed conditions is chosen (actually, we could simply take $T_x$ to be the set of all terms over $X$)
. What is important is the following Mal'cev's lemma. For an algebra $\A$ and a pair $a,b$ of its elements let $\theta(a,b)$ denote the principal congruence of $\A$ generated by $(a,b)$.

By a \emph{congruence formula} $\pi(u,v,x,y)$ we mean a first-order formula of the form
\begin{equation}\tag{CF}
\exists \bar p \left( u\approx t_0(z'_0,\bar p)\land\left(\bigwedge_{i=0}^{n-1} t_i(z_i',\bar p)\approx t_{i+1}(z'_{i+1},\bar p)\right) \land t_n(z'_n,\bar p)\approx v  \right),
\end{equation}
where $t_i(x,\bar p)\in T_x$ and $\{z_i,z_i'\}=\{x,y\}$ for all $0\leq i\leq n$.  Let us denote the set  $\{t_0,\ldots,t_n\}$ of terms appearing in (CF) by $\term(\pi)$. The set of all congruence formulae will be denoted by $\Pi$. (Now we formalize the notions from the introduction: The length of $\pi$ in (CF) equals $n$, and the depth is the maximal depth of $x$ in the terms $t_0,\ldots,t_n$.)

\begin{lemma}[\protect{\cite[Theorem V.3.3]{BS81}}] \label{lem:: Mal'cev}
For an algebra $\A$ and elements $a,b,c,d\in A$ we have $(c,d)\in\theta(a,b)$ if and only if there is a  congruence formula $\pi(u,v,x,y)$ such that $\A\models \pi(c,d,a,b)$.
\end{lemma}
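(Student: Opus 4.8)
The plan is to show that the relation
$R = \{(c,d) \in A^2 \mid \A \models \pi(c,d,a,b) \text{ for some } \pi \in \Pi\}$
coincides with $\theta(a,b)$, by establishing the two inclusions separately. The inclusion $R \subseteq \theta(a,b)$ is the ``soundness'' direction and is proved directly; the reverse inclusion is obtained by verifying that $R$ is itself a congruence containing $(a,b)$ and invoking the minimality of $\theta(a,b)$.

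For $R \subseteq \theta(a,b)$ I would argue as follows. Every $t \in T_x$ is a term, so for each choice of parameters $\bar p$ the assignment $c \mapsto t(c,\bar p)$ is a unary polynomial operation of $\A$ and is therefore compatible with the congruence $\theta(a,b)$. Since $(a,b) \in \theta(a,b)$, this yields $(t(a,\bar p), t(b,\bar p)) \in \theta(a,b)$ for every $t \in \term(\pi)$ and every $\bar p$. A congruence formula satisfied at $(c,d,a,b)$ exhibits $c$ and $d$ as the two ends of a finite chain whose successive members are linked alternately by such pairs (the places where the distinguished slot switches between $a$ and $b$, recorded by the condition $\{z_i,z_i'\}=\{x,y\}$) and by genuine equalities holding in $\A$. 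Reflexivity, symmetry and transitivity of $\theta(a,b)$ then give $(c,d) \in \theta(a,b)$.

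For the reverse inclusion I would check that $R$ is a congruence. Reflexivity is the degenerate length-zero case $c=d$, and $(a,b)\in R$ is witnessed by the formula built from the single term $x\in T_x$. Symmetry follows by reading a witnessing chain backwards, which amounts to interchanging within each step the roles of $x$ and $y$; transitivity follows by concatenating two chains after a harmless re-indexing. The substantive point, and the step I expect to be the main obstacle, is compatibility with the operations: given, for an $n$-ary basic operation $\omega$, congruence formulas witnessing $(c_j,d_j)\in R$ for $j=0,\dots,n-1$, one must produce a single congruence formula witnessing $(\omega(c_0,\dots,c_{n-1}),\omega(d_0,\dots,d_{n-1}))\in R$. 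The idea is to change one argument at a time, passing from $\omega(\dots,c_j,\dots)$ to $\omega(\dots,d_j,\dots)$ by composing the $j$-th chain with $\omega$, and then to splice the $n$ resulting chains together by transitivity. The reason the result stays inside $\Pi$ is exactly the closure of $T_x$ built into its definition: the maps $\omega(p_0,\dots,p_{j-1},x,p_{j+1},\dots,p_{n-1})$ lie in $T_x$, and $T_x$ is closed under substitution into the distinguished slot, so composing an admissible term with $\omega$ again yields an admissible term. Keeping accurate track of the alternation condition $\{z_i,z_i'\}=\{x,y\}$ through these compositions and concatenations is the only delicate bookkeeping. Once $R$ is shown to be a congruence containing $(a,b)$, minimality gives $\theta(a,b)\subseteq R$, completing the proof.
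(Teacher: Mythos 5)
The paper itself does not prove this lemma --- it simply cites \cite[Theorem V.3.3]{BS81} --- so your proposal has to be judged against the standard textbook argument, and in outline it \emph{is} that argument: soundness because each $t\in T_x$ with fixed parameters induces a unary polynomial compatible with $\theta(a,b)$, and completeness by showing the witnessed relation $R$ is a congruence containing $(a,b)$ and invoking minimality. You also handle the one genuinely substantive step correctly: compatibility with an $n$-ary basic operation $\omega$ by changing one argument at a time, pushing the $j$-th witnessing chain through the elementary translation $\omega(p_0,\dots,p_{j-1},x,p_{j+1},\dots,p_{n-1})$ --- which stays inside $\Pi$ exactly because $T_x$ contains the elementary translations and is closed under substitution into the distinguished slot --- and then splicing by transitivity. (You also read (CF) in its intended alternating form, where each term $t_i$ is evaluated at both $z_i$ and $z_i'$; the displayed formula in the paper has a misprint that, taken literally, would force $u=v$.) This is an acceptable point to be the ``main obstacle,'' and your sketch of it is right.

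There is, however, one step that fails as written: reflexivity. You dismiss it as ``the degenerate length-zero case,'' but a length-zero congruence formula asserts $u\approx t_0(z_0,\bar p)$ and $t_0(z_0',\bar p)\approx v$, and since \emph{every} term in $T_x$ contains the distinguished variable $x$, every pair witnessed by any congruence formula has both endpoints in the set of translation-images of $a$ and $b$. So $(c,c)$ need not lie in $R$: for instance, in an algebra with a single unary operation interpreted as the identity map, the only translations are the identity, hence for $c\notin\{a,b\}$ the pair $(c,c)$ belongs to $\theta(a,b)$ but satisfies no congruence formula at $(c,c,a,b)$. Thus $R$ is not literally a congruence, and indeed the lemma's ``only if'' direction is, under the paper's $T_x$-based definition of (CF), loose on the diagonal (in \cite{BS81} the terms in a principal congruence formula need not contain the designated variable, which restores the diagonal; in this paper the point is harmless, since the lemma is only ever applied to distinct pairs or to inclusions into equivalence relations, which contain the diagonal anyway). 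The repair is one line: run your minimality argument with $R\cup\Delta_A$, where $\Delta_A=\{(e,e)\mid e\in A\}$ --- your compatibility argument goes through unchanged, skipping the diagonal coordinates --- concluding $\theta(a,b)\subseteq R\cup\Delta_A$, which yields the stated equivalence for all pairs with $c\neq d$.
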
 

The above lemma suggests that we may impose various restrictions on determining principal congruences. For instance, a class $\mathcal C$ of algebras has \emph{definable principal congruences} (DPC in short) if there is a finite set $P\subseteq \Pi$ such that $\theta(a,b)=\{(c,d)\in A^2\mid \exists \pi\in P\;\;\A\models  \pi(c,d,a,b)\}$ for all $a,b\in A$, $\A\in\mathcal C$. (This topic is covered in a more general context of relative congruences in \cite{CD96}.) In this paper a weaker restriction will be crucial:
For $F\subseteq T_x$ let 
\[
\Pi^F=\{\pi\in \Pi\mid \term(\pi)\subseteq F\}.
\] 
For an algebra $\A$ and its elements $a,b$ define
\[
\theta^F(a,b)=\{(c,d)\in A^2\mid\exists \pi\in\Pi^F\;\A\models(c,d,a,b)\}.
\]
We say that a subset $F$ of $T_x$ \emph{determines principal congruences} in a class $\mathcal C$ if for every $\A\in\mathcal C$ and $a,b\in A$ we have $\theta(a,b)=\theta^F(a,b)$.
We say that $\mathcal C$ has \emph{term finite principal congruences} (TFPC in short) if there is a finite subset of $T_x$ which determines principal congruences in $\mathcal C$.

For $F\subseteq T_x$ and an algebra $\A$ let $F^{\A}$ be the set of all translation on $\A$ induced by terms in $F$. Formally,
 \[
F^\A=\{f\in A^A\mid \exists t(x,\bar p)\in F,\bar e\in A^n\;\;\forall a\in A \;f(a)=t(a,\bar e)\}.
\]
For an equivalence relation $\theta$ on the carrier of an algebra $\A$ let 
\[
\theta_F=\{(a,b)\in A^2\mid \forall f\in F^\A\; (f(a),f(b))\in \theta\}.
\]
Note that $\theta_F$ is an equivalence relation on $A$. If $x\in F$, then it is contained in $\theta$. More generally, $\theta_G\subseteq \theta_F$ whenever $F\subseteq G$.  Moreover, $\theta_{T_x}$ is the largest congruence on $\A$ contained in $\theta$  \cite[Lemma 2.1]{CDFJ04}). It is called the \emph{syntactic congruence of} $\theta$ and is denoted by $\syn(\theta)$.

We say that $F\subseteq T_x$ \emph{determines syntactic congruences} in a class $\mathcal C$ of algebras provided that $\syn(\theta)=\theta_F$ for every equivalence relation $\theta$ on the carrier of an algebra in $\mathcal C$. We say that $\mathcal C$ has \emph{finitely determined syntactic congruences}  if there is a finite subset of $T_x$ determining syntactic congruences in $\mathcal C$. 

It appears that the properties of having $FDSC$ and $TFPC$ are equivalent.
\begin{lemma}[\protect{\cite[Lemma 2.3]{CDFJ04}}]\label{FDSC <=> TFPC}
Let $\mathcal C$ be a class of algebras and $F\subseteq T_x$. Then $F$ determines syntactic congruences in $\mathcal C$ if and only if $F$ determines principal congruences in $\mathcal C$.
\end{lemma}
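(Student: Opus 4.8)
The plan is to prove the two implications of the equivalence separately, in each case reducing everything to the Mal'cev zig-zag encoded by a congruence formula and to the way the translations from $F^{\A}$ interact with an equivalence relation. Throughout I would use that $\theta^F(a,b)$ is itself an equivalence relation on $A$: it is reflexive (the degenerate congruence formula $u\approx v$, with empty term set, lies in $\Pi^F$ for every $F$), symmetric (reverse the witnessing chain) and transitive (concatenate two witnessing chains at their common endpoint). I would also isolate the one elementary observation that drives both directions: for every translation $f\in F^{\A}$ the pair $(f(a),f(b))$ is witnessed by a single-link $\Pi^F$-formula with term set $\{f\}\subseteq F$, so that $(f(a),f(b))\in\theta^F(a,b)$; dually, every ``jump'' occurring in a $\Pi^F$-formula is of the form $(f(a),f(b))$ or $(f(b),f(a))$ for some $f\in F^{\A}$.

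First I would show that if $F$ determines principal congruences then it determines syntactic congruences. Fix an equivalence relation $\theta$ on some $\A\in\mathcal C$. The inclusion $\syn(\theta)=\theta_{T_x}\subseteq\theta_F$ is automatic since $F\subseteq T_x$, so only $\theta_F\subseteq\syn(\theta)$ needs proof. Take $(a,b)\in\theta_F$ and an arbitrary translation $g\in T_x^{\A}$; it suffices to show $(g(a),g(b))\in\theta$, for then $(a,b)\in\theta_{T_x}=\syn(\theta)$. Since $\theta(a,b)$ is a congruence containing $(a,b)$, it contains $(g(a),g(b))$, and the hypothesis gives $\theta(a,b)=\theta^F(a,b)$; hence a $\Pi^F$-formula witnesses $(g(a),g(b))\in\theta^F(a,b)$. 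Writing out its zig-zag of jumps and gluing equalities,
\[
g(a)=f_0(z_0),\qquad f_i(z_i')=f_{i+1}(z_{i+1})\ \ (0\le i<n),\qquad f_n(z_n')=g(b),
\]
with $f_i\in F^{\A}$ and $\{z_i,z_i'\}=\{a,b\}$, I would use $(a,b)\in\theta_F$ to get $(f_i(a),f_i(b))\in\theta$, hence $(f_i(z_i),f_i(z_i'))\in\theta$ for each $i$ (both orientations lie in $\theta$ by symmetry). Chaining these $\theta$-links through the intervening equalities and using transitivity of $\theta$ yields $(g(a),g(b))\in\theta$, as required.

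For the converse I would argue by contraposition. Suppose $F$ does not determine principal congruences, so there are $\A\in\mathcal C$ and $a,b\in A$ with $\theta(a,b)\neq\theta^F(a,b)$; since $\theta^F(a,b)\subseteq\theta(a,b)$ always holds by Lemma~\ref{lem:: Mal'cev}, this means $\theta(a,b)\not\subseteq\theta^F(a,b)$. Now I would take the equivalence relation $\theta:=\theta^F(a,b)$ and compare its two associated congruences. On one hand $(a,b)\in\theta_F$, because for every $f\in F^{\A}$ the pair $(f(a),f(b))$ lies in $\theta^F(a,b)=\theta$ by the basic observation above. On the other hand $(a,b)\notin\syn(\theta)$: otherwise $\syn(\theta)$ would be a congruence containing $(a,b)$ and contained in $\theta=\theta^F(a,b)$, and minimality of $\theta(a,b)$ would force $\theta(a,b)\subseteq\syn(\theta)\subseteq\theta^F(a,b)$, contradicting our choice. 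Hence $\theta_F\neq\syn(\theta)$, so $F$ fails to determine syntactic congruences, which is precisely the contrapositive of the remaining implication.

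The only genuinely delicate point, and the place where I expect to spend the most care, is the bookkeeping of the Mal'cev zig-zag in the first implication: one must line up the gluing equalities and the jump steps of the congruence formula so that exactly the jumps are the pairs $(f_i(a),f_i(b))$ to which the hypothesis $(a,b)\in\theta_F$ applies, while the equalities are left untouched. A secondary point to pin down cleanly is the reflexivity convention for $\theta^F(a,b)$ (that the trivial formula $u\approx v$ is admitted), which is what makes $\theta^F(a,b)$ a genuine equivalence relation and keeps the two set equalities $\theta(a,b)=\theta^F(a,b)$ honest on the diagonal. Everything else — symmetry and transitivity of $\theta^F(a,b)$ and the two one-line congruence-formula witnesses — is routine once the correspondence between $\Pi^F$-formulae and $F^{\A}$-translations has been set up.
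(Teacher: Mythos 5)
Your proof is correct and follows essentially the same route as the proof in \cite{CDFJ04} that the paper cites without reproducing: your key step in the forward implication --- that $(a,b)\in\theta_F$ forces $\theta^F(a,b)\subseteq\theta$ by pushing each Mal'cev jump $(f_i(z_i),f_i(z_i'))$ through $\theta$ and chaining across the gluing equalities --- is precisely the fact the authors record as Lemma \ref{lem:: theta^F subseteq theta}, which they say is extracted from that very proof. Your converse by contraposition, taking $\theta:=\theta^F(a,b)$ and playing $(a,b)\in\theta_F$ (via single-link witnesses) against $(a,b)\notin\syn(\theta)$ (via minimality of $\theta(a,b)$), is the standard completion, and your explicit handling of the degenerate formula $u\approx v$ correctly resolves the diagonal/reflexivity convention left implicit in the paper's statement of (CF).
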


From the proof of Lemma \ref{FDSC <=> TFPC} we may extract the following fact.

\begin{lemma}\label{lem:: theta^F subseteq theta}
Let $\A$ be an algebra, $a,b\in A$, and $\theta$ be an eqivalence relation on $A$. If $(a,b)\in\theta_F$, then $\theta^F(a,b)\subseteq \theta$.
\end{lemma}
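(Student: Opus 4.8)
The plan is to take an arbitrary pair in $\theta^F(a,b)$, expose the Mal'cev chain hidden inside the witnessing congruence formula, and close it up using transitivity of $\theta$ together with the hypothesis $(a,b)\in\theta_F$. So first I would fix a pair $(c,d)\in\theta^F(a,b)$. By the definition of $\theta^F$ there is a congruence formula $\pi(u,v,x,y)\in\Pi^F$ of the form (CF) with $\A\models\pi(c,d,a,b)$, and since $\pi\in\Pi^F$ every term occurring in it lies in $F$, that is, $\term(\pi)=\{t_0,\dots,t_n\}\subseteq F$. Let $\bar e$ be a tuple from $A$ interpreting the existentially quantified parameters $\bar p$ that witnesses $\A\models\pi(c,d,a,b)$.

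Next I would record the two facts that drive the argument. First, reading the conjuncts of (CF) with $u,v,x,y$ interpreted as $c,d,a,b$, each term $t_i$ occurs exactly twice, once with its distinguished variable evaluated at $a$ and once at $b$; this is precisely the content of the constraint $\{z_i,z_i'\}=\{x,y\}$. The equalities supplied by (CF) then read: the first occurrence equals $c$, each internal conjunct glues an occurrence of $t_i$ to an occurrence of $t_{i+1}$, and the final occurrence equals $d$. Second, for each $i$ the unary translation $w\mapsto t_i^{\A}(w,\bar e)$ belongs to $F^{\A}$ because $t_i\in F$; hence the hypothesis $(a,b)\in\theta_F$ yields $(t_i^{\A}(a,\bar e),\,t_i^{\A}(b,\bar e))\in\theta$. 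In particular the two occurrences of $t_i$ are $\theta$-related.

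Finally I would assemble the chain. Listing the entries of (CF) in order, one obtains a sequence beginning at $c$ and ending at $d$ in which every adjacent pair is either an equality coming from a conjunct of (CF) or a pair of the form $(t_i^{\A}(a,\bar e),\,t_i^{\A}(b,\bar e))$, which lies in $\theta$ by the previous paragraph. Since $\theta$ is an equivalence relation, transitivity collapses the entire sequence and delivers $(c,d)\in\theta$; as $(c,d)$ was arbitrary, this gives $\theta^F(a,b)\subseteq\theta$.

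The argument is essentially a bookkeeping exercise, and it mirrors one direction of Lemma \ref{FDSC <=> TFPC}; the only point demanding care is tracking, link by link, which of the two occurrences of each $t_i$ sits at $a$ and which at $b$, together with the observation that a single parameter tuple $\bar e$ serves all the terms simultaneously, so that the hypothesis $(a,b)\in\theta_F$ can be applied uniformly along the whole chain.
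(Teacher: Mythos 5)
Your proof is correct and is exactly the argument the paper intends: the lemma is stated there without proof, as extractable from the proof of Lemma \ref{FDSC <=> TFPC} (i.e., \cite[Lemma 2.3]{CDFJ04}), and that extraction is precisely your chaining of the links $\bigl(t_i^{\A}(a,\bar e),t_i^{\A}(b,\bar e)\bigr)\in\theta$ supplied by $(a,b)\in\theta_F$ through the equalities of the congruence formula, closed up by transitivity (and symmetry) of $\theta$. The two points you flag explicitly --- that one parameter tuple $\bar e$ serves all the terms $t_0,\dots,t_n$ simultaneously, and that symmetry of $\theta$ absorbs whichever of the two occurrences of each $t_i$ sits at $a$ --- are indeed the only places requiring care.
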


What makes the property of having FDSC relevant in the context of topological algebras is the fact that this property yields  profiniteness. The following observation was first proved for many special cases, see the historical comments preceding \cite[Clopen Equivalence Lemma 4.2]{CDFJ04}. (The reader may also see \cite{SZ17} for a related characterization of profiniteness.)

\begin{proposition}[\protect{\cite[Theorem 2.13]{CDJP08}}]
\label{prop::FDSC=>profiniteness}
Let $\undertilde\A$ be a Boolean topological algebra. If there is a finite set of terms that determines syntactic congruences on $\A$, then $\undertilde\A$ is profinite.
\end{proposition}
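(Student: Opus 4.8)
The plan is to reduce profiniteness to a point-separation property and then to manufacture enough clopen congruences from the finite set of terms. Recall the standard criterion: a Boolean topological algebra $\undertilde\A$ is profinite precisely when its \emph{clopen} congruences separate points, that is, when for all distinct $a,b\in A$ there is a clopen congruence $\theta$ with $(a,b)\notin\theta$. Indeed, the clopen congruences form a family directed by intersection (an intersection of two clopen congruences is again a clopen congruence), each quotient $\A/\theta$ is finite and discrete — here one uses that on a compact space a clopen congruence automatically has finite index, since its classes are clopen and cover $A$ — and the canonical homomorphism $\undertilde\A\to\varprojlim\A/\theta$ is continuous and surjective by compactness. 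It is injective exactly when these congruences separate points, and a continuous bijection from a compact space onto a Hausdorff one is a homeomorphism, so in that case it is an isomorphism of topological algebras. Thus it suffices to separate any two distinct points by a clopen congruence.

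First I would fix distinct $a,b\in A$. Since $(A,\mathcal{T})$ is Boolean, hence totally disconnected, there is a clopen set $U$ with $a\in U$ and $b\notin U$. Let $\theta$ be the equivalence relation whose two classes are $U$ and $A\setminus U$; as a subset of $A^2$ it is $(U\times U)\cup((A\setminus U)\times(A\setminus U))$, which is clopen, and $(a,b)\notin\theta$. The syntactic congruence $\syn(\theta)$ is the largest congruence contained in $\theta$, so $\syn(\theta)\subseteq\theta$ and hence $(a,b)\notin\syn(\theta)$. By hypothesis there is a finite $F\subseteq T_x$ determining syntactic congruences on $\A$, so $\syn(\theta)=\theta_F$, and it remains only to show that $\theta_F$ is clopen.

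The hard part is exactly this clopenness claim, and it is where both the finiteness of $F$ and the compactness of $\undertilde\A$ enter. I would write $\theta_F=\bigcap_{t\in F}S_t$, where for a term $t(x,\bar p)\in F$ with $n$ parameter variables
\[
S_t=\{(c,d)\in A^2\mid \forall\bar e\in A^n\;\;(t(c,\bar e),t(d,\bar e))\in\theta\}.
\]
Since $\A$ is a topological algebra, the map $g_t\colon A^2\times A^n\to A^2$, $(c,d,\bar e)\mapsto(t(c,\bar e),t(d,\bar e))$, is continuous, so $W_t:=g_t^{-1}(\theta)$ is clopen. Now $S_t=\{(c,d)\mid \{(c,d)\}\times A^n\subseteq W_t\}$: its complement is the projection to $A^2$ of the open set $(A^2\times A^n)\setminus W_t$, hence open, so $S_t$ is closed; and $S_t$ is open by the tube lemma, because $A^n$ is compact and $W_t$ is open (for $(c,d)\in S_t$ the tube lemma yields an open $V\ni(c,d)$ with $V\times A^n\subseteq W_t$, i.e.\ $V\subseteq S_t$). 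Thus each $S_t$ is clopen, and since $F$ is finite the intersection $\theta_F$ is clopen. Consequently $\syn(\theta)=\theta_F$ is a clopen congruence separating $a$ and $b$; as $a,b$ were arbitrary, clopen congruences separate points and $\undertilde\A$ is profinite. The genuine crux is the tube-lemma step: finiteness of $F$ collapses the potentially infinite intersection defining $\theta_F$ to a finite one, while compactness of the parameter space $A^n$ converts the universally quantified membership condition into an open (hence clopen) condition.
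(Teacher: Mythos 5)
Your proof is correct and takes essentially the same route as the argument behind the cited result (the paper itself states this proposition without proof, importing it from Theorem 2.13 of CDJP08, whose proof likewise rests on the Clopen Equivalence Lemma of CDFJ04): separate two points by a clopen two-block equivalence relation $\theta$, use finiteness of $F$ plus compactness of the parameter space (the tube-lemma step) to show $\syn(\theta)=\theta_F$ is clopen, and conclude profiniteness from point-separating clopen congruences via the standard inverse-limit criterion. All the steps, including the finite-index observation for clopen congruences and the surjectivity-by-compactness of the canonical map, check out.
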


Proposition \ref{prop::FDSC=>profiniteness} and Fact \ref{fact:: standardness for varieties} give the following theorem.

\begin{theorem}[\protect{\cite[Theorem 2.13]{CDJP08}, see also \cite[FDSC-HSP Theorem 4.3]{CDFJ04}}]\label{thm:: FDSC => standard}
Let $\V$ be a variety. If $\V$ has FDSC, then $\V$ is standard. 
\end{theorem}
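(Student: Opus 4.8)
The plan is to compose Proposition \ref{prop::FDSC=>profiniteness} with Fact \ref{fact:: standardness for varieties}; essentially no new work is required beyond unpacking definitions. First I would record what the hypothesis gives us: to say that $\V$ has FDSC means, by definition, that there exists a \emph{finite} set $F\subseteq T_x$ determining syntactic congruences in $\V$, that is, $\syn(\theta)=\theta_F$ for every equivalence relation $\theta$ on the carrier of every algebra belonging to $\V$.

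Next I would take an arbitrary Boolean topological algebra $\undertilde\A=(\A,{\mathcal T})$ whose algebraic reduct $\A$ lies in $\V$, with the goal of showing that it is profinite. Since $\A\in\V$, the very same finite set $F$ in particular determines syntactic congruences on $\A$. This is exactly the hypothesis needed for Proposition \ref{prop::FDSC=>profiniteness}, which then applies directly and yields that $\undertilde\A$ is profinite.

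Finally I would invoke Fact \ref{fact:: standardness for varieties}, which states that a variety $\V$ is standard precisely when every Boolean topological algebra with algebraic reduct in $\V$ is profinite. Having established profiniteness for an arbitrary such algebra, the desired conclusion that $\V$ is standard follows immediately.

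There is no genuine obstacle here; the only place meriting a moment's care is the quantifier order hidden in the definition of FDSC. The finite witness $F$ is chosen \emph{uniformly} for the entire class $\V$, whereas Proposition \ref{prop::FDSC=>profiniteness} only requires a finite witness for the single algebra $\A$. Because a class-wide witness restricts to a witness on each member, this direction of the argument is free. It is worth remarking that the reverse passage (manufacturing a single uniform witness out of per-algebra witnesses) would not be automatic, but that is not needed for the implication at hand.
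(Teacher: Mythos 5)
Your proposal is correct and follows exactly the paper's own route: the paper derives this theorem by combining Proposition \ref{prop::FDSC=>profiniteness} with Fact \ref{fact:: standardness for varieties}, just as you do. Your remark about the quantifier order (the class-wide witness $F$ restricting to a per-algebra witness) is a fair observation, and indeed that direction is immediate.
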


\section{Determining  principal subcongruences}
\label{sec::TFPSC}

We say that a pair $F,G$ of subsets of $T_x$ \emph{determines principal subcongruences} in a class $\mathcal C$ of algebras if for every $\A\in \mathcal C$ and for every pair $a,b$ of distinct elements of $A$ there is a pair $c,d$ of distinct elements of $A$ such that  
\[
(c,d)\in\theta^F(a,b)\quad\text{ and }\quad \theta(c,d)=\theta^G(c,d).
\]
A class $\mathcal C$ has \emph{term finite principal subcongruences} (TFPSC in short) if there is a pair  of finite subsets of $T_x$ which determines principal subcongruenses in $\mathcal C$.

Let $\theta,\eta$ be equivalence relations on a set $A$. If $\eta\subseteq\theta$ then the quotient equivalence relation $\theta/\eta$ on the set $A/\eta$ is given by
\[
(a/\eta,b/\eta)\in \theta/\eta\quad\text{iff}\quad (a,b)\in\theta.
\]

\begin{lemma} \label{lem::syn and F commute}
Let $F\subseteq T_x$, $\theta$ be an equivalence relation on the carrier of an algebra $\A$ and $a,b\in A$. Then $(a,b)\in \theta_F$ if and only if $(a/\kern-1pt\syn(\theta),b/\kern-1pt\syn(\theta))\in \left(\theta/\kern-1pt\syn(\theta)\right)_F$. Consequently, $\syn(\theta/\kern-1pt\syn(\theta))$ is the identity relation on $A/\kern-1pt\syn(\theta)$.
\end{lemma}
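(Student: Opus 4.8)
The plan is to reduce everything to a single correspondence between translations on $\A$ and translations on the quotient $\A/\syn(\theta)$. Write $\eta=\syn(\theta)$. Since $\syn(\theta)=\theta_{T_x}$ is by construction a congruence, and since it is contained in $\theta$ (because $x\in T_x$ forces $\theta_{T_x}\subseteq\theta$), the quotient algebra $\B=\A/\eta$ is well defined and the quotient equivalence relation $\theta/\eta$ on $A/\eta$ makes sense. I would first record the key bookkeeping fact. Let $\nu\colon\A\to\B$ be the canonical projection. For a term $t(x,\bar p)\in F$ and parameters $\bar e\in A^n$, the translation $f(c)=t(c,\bar e)$ on $\A$ and the translation $g(c/\eta)=t(c/\eta,\bar e/\eta)$ on $\B$ satisfy $f(c)/\eta=g(c/\eta)$ for all $c\in A$ (this is just the statement that $\nu$ is a homomorphism); and because $\nu$ is surjective, this sets up a two-way correspondence: every member of $F^{\B}$ arises from some member of $F^{\A}$ by projecting its parameters along $\nu$, and conversely every member of $F^{\A}$ projects to one in $F^{\B}$.

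Given this, both directions of the biconditional should fall out by unwinding definitions. For the forward direction I would assume $(a,b)\in\theta_F$, take an arbitrary $g\in F^{\B}$, pick a matching $f\in F^{\A}$, and use $(f(a),f(b))\in\theta$ together with the defining equivalence of $\theta/\eta$ to conclude $(g(a/\eta),g(b/\eta))\in\theta/\eta$; as $g$ was arbitrary this yields $(a/\eta,b/\eta)\in(\theta/\eta)_F$. The converse is symmetric: start from an arbitrary $f\in F^{\A}$, project it to the matching $g$, apply the hypothesis to obtain $(f(a)/\eta,f(b)/\eta)\in\theta/\eta$, and read off $(f(a),f(b))\in\theta$.

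For the final clause I would specialise to $F=T_x$, so that the main equivalence becomes $(a,b)\in\theta_{T_x}\iff(a/\eta,b/\eta)\in(\theta/\eta)_{T_x}$, that is, $(a,b)\in\eta\iff(a/\eta,b/\eta)\in\syn(\theta/\eta)$. Since $(a,b)\in\eta$ says exactly that $a/\eta=b/\eta$, this identifies $\syn(\theta/\eta)$ with the identity relation on $A/\eta$, as claimed.

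I expect the only real obstacle to be verifying the translation correspondence cleanly. Surjectivity of $\nu$ handles the lifting of parameters from $B$ to $A$, but the crucial point is that $f$ and $g$ are intertwined by $\nu$, and this is precisely where $\eta$ being a \emph{congruence} (rather than a mere equivalence relation) is used: it is what lets basic operations, hence arbitrary term translations, commute with passage to $\eta$-classes. Once that intertwining is in place, every remaining step is a direct expansion of the definitions of $\theta_F$, $F^{\A}$ and $\theta/\eta$.
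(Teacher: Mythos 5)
Your proposal is correct and follows essentially the same route as the paper: the paper's proof is exactly the chain of four definitional equivalences you describe, with the middle step (the passage from $F^{\A}$ to $F^{\A/\kern-1pt\syn(\theta)}$) justified, as you note, by $\syn(\theta)$ being a congruence, and the final clause obtained by setting $F=T_x$. The only difference is one of exposition --- you spell out the two-way translation correspondence via surjectivity of the projection, which the paper compresses into a single remark.
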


\begin{proof}
By definition of $\theta_F$, we have

\begin{align*}
 (a,b)\in \theta_F &\quad\text{iff}\quad (f(a),f(b))\in\theta\;\,\text{for all}\;\, f\in F^\A\\
&\quad\text{iff}\quad (f(a)/\kern-1pt\syn(\theta),f(b)/\kern-1pt\syn(\theta))\in \theta/\kern-1pt\syn(\theta)\;\, \text{for all}\;\, f\in F^\A\;\\
&\quad\text{iff}\quad (g(a/\kern-1pt\syn(\theta)),g(b/\kern-1pt\syn(\theta)))\in \theta/\kern-1pt\syn(\theta)\;\, \text{for all}\;\, g\in F^{\A/\kern-1pt\syn(\theta)}\\
&\quad\text{iff}\quad (a/\kern-1pt\syn(\theta),b/\kern-1pt\syn(\theta))\in \left(\theta/\kern-1pt\syn(\theta)\right)_F.
\end{align*}
The third equivalence follows from the fact that $\syn(\theta)$ is a congruence on $\A$. 
By setting $F=T_x$, we obtain the second assertion.
\end{proof}

For $F,G\subseteq T_x$ we define
\[
F\circ G =\{ s(t(x,\bar q),\bar p)\mid s(x,\bar p)\in F\text{ and } t(x,\bar q)\in G\}.
\]
Then for every algebra $\A$ and every equivalence relation $\theta$ on $A$ we have
\begin{equation} \label{eqn:: F circ G}\tag{COMP}
\theta_{F\circ G}=(\theta_F)_G.
\end{equation}

\begin{proposition}\label{prop:: main}
Let $\mathcal C$ be a class of algebras and  assume that $\mathcal C$ is closed under homomorphic images.
Let  $F,G\subseteq T_x$. If the pair $F,G$ determines principal subcongruences in $\mathcal C$, then $G\circ F$ determines syntactic congruences in $\mathcal C$. 
\end{proposition}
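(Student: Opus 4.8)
The plan is to prove that $\syn(\theta)=\theta_{G\circ F}$ for every equivalence relation $\theta$ on the carrier of an algebra $\A\in\mathcal C$; this is precisely the assertion that $G\circ F$ determines syntactic congruences in $\mathcal C$.

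First I would reduce to the case in which $\syn(\theta)$ is the identity relation $\Delta$. Since $\syn(\theta)$ is a congruence and $\mathcal C$ is closed under homomorphic images, the quotient $\A/\syn(\theta)$ again lies in $\mathcal C$; write $\bar\theta=\theta/\syn(\theta)$ for the induced equivalence relation. By the second assertion of Lemma~\ref{lem::syn and F commute} we have $\syn(\bar\theta)=\Delta$, and by its first assertion (applied with the set $G\circ F$) we have $(a,b)\in\theta_{G\circ F}$ if and only if $(a/\syn(\theta),b/\syn(\theta))\in\bar\theta_{G\circ F}$. Consequently, once the claim ``$\syn(\theta)=\Delta$ implies $\theta_{G\circ F}=\Delta$'' is established, applying it to $\bar\theta$ gives $\bar\theta_{G\circ F}=\Delta$, whence $(a,b)\in\theta_{G\circ F}$ iff $(a,b)\in\syn(\theta)$; that is, $\theta_{G\circ F}=\syn(\theta)$, as desired. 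So it suffices to prove the displayed claim.

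Next I would prove the claim by contradiction. Assume $\syn(\theta)=\Delta$ but some pair $(a,b)\in\theta_{G\circ F}$ has $a\neq b$. By the composition identity~\eqref{eqn:: F circ G} we have $\theta_{G\circ F}=(\theta_G)_F$, so $(a,b)\in(\theta_G)_F$. Applying Lemma~\ref{lem:: theta^F subseteq theta} with the equivalence relation $\theta_G$ playing the role of $\theta$ gives $\theta^F(a,b)\subseteq\theta_G$. Now I invoke the hypothesis that the pair $F,G$ determines principal subcongruences in $\mathcal C$: since $a\neq b$, there are distinct $c,d\in A$ with $(c,d)\in\theta^F(a,b)$ and $\theta(c,d)=\theta^G(c,d)$. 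In particular $(c,d)\in\theta^F(a,b)\subseteq\theta_G$.

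Finally, from $(c,d)\in\theta_G$ and a second application of Lemma~\ref{lem:: theta^F subseteq theta} (this time with $G$ and $\theta$) I obtain $\theta^G(c,d)\subseteq\theta$. Combined with $\theta(c,d)=\theta^G(c,d)$, this shows that $\theta(c,d)$ is a congruence contained in $\theta$; since $\syn(\theta)$ is the largest congruence contained in $\theta$, we get $\theta(c,d)\subseteq\syn(\theta)=\Delta$, contradicting $(c,d)\in\theta(c,d)$ with $c\neq d$. I expect the heart of the argument to be this two-step chaining of Lemma~\ref{lem:: theta^F subseteq theta}: the first application pushes the whole class $\theta^F(a,b)$ into $\theta_G$, and the second converts the ``good'' subcongruence pair $(c,d)$ --- whose $G$-defined congruence already equals its full principal congruence --- into a nontrivial congruence sitting inside $\theta$, which is exactly what contradicts $\syn(\theta)=\Delta$. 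The other point needing care is the legitimacy of the reduction, i.e.\ that Lemma~\ref{lem::syn and F commute} transports $\theta_{G\circ F}$ faithfully to the quotient.
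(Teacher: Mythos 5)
Your proof is correct and follows essentially the same route as the paper's: reduce to the quotient $\A/\kern-1pt\syn(\theta)$ (where the syntactic congruence becomes trivial) via Lemma \ref{lem::syn and F commute}, then derive a contradiction by chaining (\ref{eqn:: F circ G}) with two applications of Lemma \ref{lem:: theta^F subseteq theta} to the pair $(c,d)$ supplied by the principal-subcongruence hypothesis. The only cosmetic difference is that you phrase the quotient step as an explicit reduction to the case $\syn(\theta)=\Delta$, whereas the paper works directly in $\B=\A/\kern-1pt\syn(\theta)$ with $\eta=\theta/\kern-1pt\syn(\theta)$; the substance is identical.
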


\begin{proof}
Let $\A$ be an algebra in $\mathcal C$ and $\theta$ be an equivalence relation on $A$. We want to show that $\syn(\theta)=\theta_{G\circ F}$. Let $\B=\A/\kern-1pt\syn(\theta)$ and $\eta=\theta/\kern-1pt\syn(\theta)$. By Lemma \ref{lem::syn and F commute}, $\syn(\eta)$ is the identity relation on $B$. Thus we should show that $\eta_{G\circ F}$ is the identity relation on $B$.
Assume that it is not the case, i.e., there is a pair $a,b$ of distinct elements of $B$ such that $(a,b)\in\eta_{G\circ F}$. 

By assumption, $\B=\A/\kern-1pt\syn(\theta)\in\mathcal C$.
Thus the pair $F,G$ determines principal subcongruences in $\B$. This yields that there exists a pair $c,d$ of distinct elements such that
\[
(c,d)\in\theta^F(a,b)\quad\text{ and }\quad\theta(c,d)=\theta^G(c,d).
\]
By (\ref{eqn:: F circ G}), $\eta_{G\circ F}=(\eta_G)_F$. Hence, by Lemma \ref{lem:: theta^F subseteq theta}, 
\[
(c,d)\in\theta^F(a,b)\subseteq \eta_G.
\]
Applying Lemma \ref{lem:: theta^F subseteq theta} once more, but 
for $c,d$ and $G$, we infer that
\[
\theta(c,d)=\theta^G(c,d)\subseteq \eta.
\]
It follows that $(c,d)\in\eta_{T_x}$. Since $\eta_{T_x}=\syn(\eta)$ and $\syn(\eta)$ is the identity relation on $B$, $c=d$. This is a contradiction with the choice of $c,d$. 
\end{proof}

\begin{theorem}\label{thm:: FDPC <=> TFPSC}
Let $\mathcal C$ be a class of algebras and  assume that $\mathcal C$ is closed under homomorphic images.
Then $\mathcal C$ has FDSC if and only if $\mathcal C$ has TFPSC.
\end{theorem}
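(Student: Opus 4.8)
The plan is to prove both implications of the biconditional, with the bulk of the work already packaged into Proposition \ref{prop:: main} for one direction.

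For the direction TFPSC $\Rightarrow$ FDSC, I would argue as follows. Suppose $\mathcal C$ has TFPSC, witnessed by a pair $F,G$ of finite subsets of $T_x$ determining principal subcongruences in $\mathcal C$. Since $\mathcal C$ is closed under homomorphic images, Proposition \ref{prop:: main} applies directly and tells us that $G\circ F$ determines syntactic congruences in $\mathcal C$. Because $F$ and $G$ are finite, $G\circ F$ is finite as well (each element is of the form $s(t(x,\bar q),\bar p)$ with $s\in G$, $t\in F$, so $|G\circ F|\leq |G|\cdot|F|$). Hence there is a finite subset of $T_x$ determining syntactic congruences in $\mathcal C$, which is exactly the statement that $\mathcal C$ has FDSC.

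For the converse direction FDSC $\Rightarrow$ TFPSC, suppose $\mathcal C$ has FDSC, witnessed by a finite $F_0\subseteq T_x$ determining syntactic congruences, equivalently (by Lemma \ref{FDSC <=> TFPC}) determining principal congruences in $\mathcal C$. The natural idea is to produce a witnessing pair for TFPSC by a degenerate choice. Set $F=\{x\}$ and $G=F_0$. Then for an algebra $\A\in\mathcal C$ and a pair $a,b$ of distinct elements, I would take $(c,d)=(a,b)$: since $x\in T_x$, the trivial congruence formula built from $x$ gives $(a,b)\in\theta^{\{x\}}(a,b)=\theta^F(a,b)$, and $a\neq b$ provides the required pair of distinct elements. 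It then remains to check that $\theta(c,d)=\theta^G(c,d)$, i.e.\ $\theta(a,b)=\theta^{F_0}(a,b)$, which is precisely the assumption that $F_0$ determines principal congruences in $\mathcal C$. Both $F$ and $G$ are finite, so $\mathcal C$ has TFPSC.

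The main thing to verify carefully is that the degenerate choice in the converse direction genuinely satisfies the definition of ``determines principal subcongruences''; in particular one must confirm that $x\in F$ really forces $(a,b)\in\theta^F(a,b)$ (which follows from $x\in T_x$ and the reflexivity built into congruence formulae) and that the pair $(c,d)=(a,b)$ is admissible because $a,b$ were assumed distinct. I do not expect a substantial obstacle here, since the hard combinatorial content — turning a subcongruence witness into a genuine syntactic-congruence witness via the composition $G\circ F$ and the commutation identity (\ref{eqn:: F circ G}) — is exactly what Proposition \ref{prop:: main} already establishes; the theorem is essentially the assembly of that proposition with the finiteness bookkeeping and the easy reverse inclusion.
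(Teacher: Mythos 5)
Your proof is correct and takes essentially the same route as the paper: the TFPSC $\Rightarrow$ FDSC direction is exactly an application of Proposition \ref{prop:: main} (together with the finiteness of $G\circ F$, which the paper leaves implicit), and your degenerate choice $F=\{x\}$, $G=F_0$, $(c,d)=(a,b)$ is just an explicit unpacking of what the paper compresses into ``Lemma \ref{FDSC <=> TFPC} and the obvious fact that having TFPC yields TFPSC.'' The verification you flag --- that $x\in T_x$ together with the reflexive congruence formula forces $(a,b)\in\theta^{\{x\}}(a,b)$, and that $a\neq b$ makes the pair admissible --- goes through without difficulty.
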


\begin{proof}
The forward implication follows from Lemma \ref{FDSC <=> TFPC} and the obvious fact that having TFPC yields TFPSC. The backward implication follows from Proposition \ref{prop:: main}.
\end{proof}

We present two corollaries.

We say that a class $\mathcal C$ has \emph{definable principal subcongruences} (DPSC for short) if there are finite subsets  $P,R$ of $T_x$ such that for every algebra $\A\in\mathcal C$ and for every pair $a,b$ of distinct elements of $A$ there are a pair $c,d$ of distinct elements of $A$ and a congruence formula $\pi\in P$ such that
\[
\A\models\pi(c,d,a,b)\quad\text{ and }\quad\theta(c,d)=\{(e,f)\in A^2\mid\exists \rho\in R\;\, \A\models\rho(e,f,c,d)\}.
\] 
In \cite[Proposition 2.8]{CDFJ04} it is pointed that having DPC yiels having TFPC, and hence also having FDSC. But 
clearly, also having DPSC yields TFPSC. Thus Theorem \ref{thm:: FDPC <=> TFPSC} gives the following stronger fact.

\begin{corollary} \label{cor::DPSC=>FDSC}
Let $\mathcal C$ be a class of algebras and  assume that $\mathcal C$ is closed under homomorphic images. If $\mathcal C$ has DPSC, then $\mathcal C$ has FDPC.
\end{corollary}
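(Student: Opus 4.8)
The plan is to obtain the corollary as an immediate consequence of Theorem~\ref{thm:: FDPC <=> TFPSC}. Because $\mathcal C$ is closed under homomorphic images, that theorem gives the equivalence of FDSC and TFPSC for $\mathcal C$, so the whole task reduces to showing that the DPSC hypothesis yields TFPSC. Concretely, I would extract from the finite sets of congruence formulae furnished by DPSC a pair of finite subsets of $T_x$ that determines principal subcongruences.

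First I would fix the finite sets $P,R\subseteq\Pi$ given by the definition of DPSC and put
\[
F=\bigcup_{\pi\in P}\term(\pi)\qquad\text{and}\qquad G=\bigcup_{\rho\in R}\term(\rho).
\]
Since each $\term(\pi)$ and each $\term(\rho)$ is a finite subset of $T_x$ and $P,R$ are finite, both $F$ and $G$ are finite. I would then claim that this pair $F,G$ determines principal subcongruences in $\mathcal C$ and verify the claim straight from the definition.

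To do so, fix $\A\in\mathcal C$ and distinct $a,b\in A$. DPSC supplies distinct $c,d\in A$ and a congruence formula $\pi\in P$ with $\A\models\pi(c,d,a,b)$ and with $\theta(c,d)$ equal to the relation defined by the formulae of $R$. For the first condition I would note that $\term(\pi)\subseteq F$ forces $\pi\in\Pi^F$, so $(c,d)\in\theta^F(a,b)$. For the second condition I would establish $\theta(c,d)=\theta^G(c,d)$ by two inclusions: from $\term(\rho)\subseteq G$ for every $\rho\in R$ we get $R\subseteq\Pi^G$, which gives $\theta(c,d)\subseteq\theta^G(c,d)$; conversely, Mal'cev's Lemma~\ref{lem:: Mal'cev} shows that any pair witnessed by a congruence formula lies in the principal congruence, hence $\theta^G(c,d)\subseteq\theta(c,d)$. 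This exhibits TFPSC with witnesses $F,G$, and Theorem~\ref{thm:: FDPC <=> TFPSC} then delivers FDSC.

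I do not expect a genuine obstacle: the proof is essentially a translation of the DPSC formulae into the term-based language of $\Pi^F$ and $\theta^F$. The only point requiring a little care is the equality $\theta(c,d)=\theta^G(c,d)$, where one must keep the two inclusions apart---the ``$\subseteq$'' coming from the defining power of $R$ together with $R\subseteq\Pi^G$, and the ``$\supseteq$'' coming from Mal'cev's lemma---together with the trivial but essential observation that $F$ and $G$ are finite.
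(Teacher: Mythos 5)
Your proof is correct and follows the paper's own route: the paper likewise disposes of the corollary by noting that DPSC ``clearly'' yields TFPSC and then invoking Theorem~\ref{thm:: FDPC <=> TFPSC}, and your choice of $F=\bigcup_{\pi\in P}\term(\pi)$, $G=\bigcup_{\rho\in R}\term(\rho)$ with the two-inclusion check via Mal'cev's Lemma~\ref{lem:: Mal'cev} is exactly the routine verification the paper leaves implicit. No gaps; you have simply made the obvious step explicit.
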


The following corollary was proved in \cite{Wan90}, and then reproved in \cite{DJMM08}. The novelty here is the fact that we do not need to use  J\'onsson's terms in order to prove it.
Indeed, one of the main results in \cite{BW02} is the proof (without J\'onsson's terms) that finitely generated congruence distributive varieties have DPSC. 
 
\begin{corollary}\label{fg+cd=>FDSC}
Let $\mathcal V$ be a congruence distributive finitely generated variety. Then $\V$ has FDSC.
\end{corollary}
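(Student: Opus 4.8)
The plan is to read this off as essentially a one-line consequence of Corollary \ref{cor::DPSC=>FDSC}, with the only genuine input being a theorem of Baker and Wang. First I would observe that a variety is, by definition, closed under homomorphic images (indeed under $H$, $S$ and $P$), so $\V$ satisfies the standing hypothesis of Corollary \ref{cor::DPSC=>FDSC} that the class be closed under homomorphic images. Hence it suffices to produce finite subsets $P,R\subseteq T_x$ witnessing that $\V$ has DPSC: once DPSC is in hand, Corollary \ref{cor::DPSC=>FDSC} delivers FDSC with no further work.

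The substance is therefore the assertion that every finitely generated congruence distributive variety has DPSC. I would not reprove this. Instead I would cite \cite{BW02}, where Baker and Wang establish exactly this statement, and \emph{crucially} do so without invoking J\'onsson's terms; this DPSC result is precisely the engine behind their streamlined proof of Baker's finite basis theorem. Plugging it into Corollary \ref{cor::DPSC=>FDSC} immediately yields that $\V$ has FDSC, which is the desired conclusion.

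The point worth emphasizing, and the reason this counts as a new derivation, is that the entire passage from DPSC to FDSC is now free of any congruence-distributive machinery: it is supplied abstractly by Theorem \ref{thm:: FDPC <=> TFPSC} together with Corollary \ref{cor::DPSC=>FDSC}. Thus congruence distributivity and finite generation enter only through the cited Baker--Wang step, whereas the earlier proofs in \cite{Wan90,DJMM08} routed through J\'onsson's terms to reach a definability statement directly. There is in effect no obstacle on our side: the only hard content is packaged into the external DPSC result, and no additional analysis of principal congruences in $\V$ is required.
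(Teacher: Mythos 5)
Your proof is correct and is essentially identical to the paper's: the authors also derive the corollary by combining Corollary \ref{cor::DPSC=>FDSC} with the Baker--Wang theorem \cite[Theorem 2]{BW02} that finitely generated congruence distributive varieties have DPSC, noting (as you do) that the point is avoiding J\'onsson's terms. Your only addition is spelling out the (trivially satisfied) closure-under-homomorphic-images hypothesis, which the paper leaves implicit.
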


\begin{proof}
It follows by Corollary \ref{cor::DPSC=>FDSC} and \cite[Theorem 2]{BW02}.
\end{proof}

\section{Undecidability}
\label{sec:: undecidability}

By modifying McKenzie construction \cite{McK96}, Moore \cite{Moo15} provided an effective construction that takes a Turing machine $\T$ and returns the algebra $\sf A'(\T)$ with some properties depending on whether $\T$ halts or not. The construction  is quite complicated. However, we extract some information relevant for our paper in the next theorem. 

We say that two algebras are term equivalent if they have the same carrier and the same set of term operations. This means that every basic operation of one of this algebras is a term operation of the second algebra. 

Let ${\bf Q}_\omega=(Q_\omega,\sqcap,\cdot,0)$ be the algebra with the carrier
\[
Q_\omega=\{a_0,b_0,a_1,b_1,\ldots\}\cup\{0\},
\]
where $0$ is a constant and $\sqcap,\cdot$ are binary operations given by 
\[
a\sqcap b =
\begin{cases}
a &\text{ if } a=b\\
0 &\text{ if }a\not =b
\end{cases}
\quad\quad\quad
a\cdot b =
\begin{cases}
b_i &\text{ if } a=a_i,b\,=b_{i+1}\text{ and }i\in \mathbb N\\ 
0 &\text{ otherwise}
\end{cases}.
\]
Note that $\bf Q_\omega$ is subdirectly irreducible and locally finite.

\begin{theorem}[\protect{\cite{Moo15}}]
\label{thm::Moore}
Let $\T$ be a Turing machine and $\sf A'(T)$ be the algebra constructed as in \cite{Moo15} for $\T$. Then
\begin{itemize}
\item if $\T$ halts on empty input, then ${\sf V}(\sf A'(\T))$ has DPSC;
\item if $\T$ does not halt on empty input, then there is an algebra $\bf Q$ in  ${\sf V}(\sf A'(\T))$ which is term equivalent to $\bf Q_\omega$.
\end{itemize} 
\end{theorem}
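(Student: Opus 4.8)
The plan is to work from McKenzie's encoding \cite{McK96} of Turing machines into finite algebras and to follow how Moore's single extra basic operation reshapes the generated variety in the two regimes. First I would recall the governing feature of McKenzie's algebra ${\sf A}(\T)$: the variety ${\sf V}({\sf A}(\T))$ is engineered so that its subdirectly irreducible members encode (partial) computations of $\T$, and the cardinalities of these subdirectly irreducibles are bounded by a fixed finite number precisely when $\T$ halts on empty input. When $\T$ does not halt, the simulated computation never terminates, and one obtains among the subdirectly irreducibles an infinite one that is term equivalent to $\mathbf Q_\omega$.

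For the non-halting case I would argue as follows. Since $\T$ does not halt, $\mathbf Q_\omega$ already occurs (up to term equivalence) in ${\sf V}({\sf A}(\T))$. Moore's modification only adjoins one basic operation to ${\sf A}(\T)$, so the task is to interpret this operation on $\mathbf Q_\omega$ in such a way that it is already a term operation of $\mathbf Q_\omega$ and that it respects all the defining relations of ${\sf A'}(\T)$. The resulting expansion $\mathbf Q$ then has exactly the term operations of $\mathbf Q_\omega$ (hence is term equivalent to it) and, by the choice of interpretation, satisfies the identities of ${\sf A'}(\T)$, so $\mathbf Q \in {\sf V}({\sf A'}(\T))$. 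The content here is the verification that adding the new operation does not expel this infinite subdirectly irreducible from the variety.

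For the halting case the goal is \textrm{DPSC}, so I would exhibit finite sets $P, R \subseteq T_x$ of congruence formulae witnessing principal subcongruences throughout ${\sf V}({\sf A'}(\T))$. By the usual subdirect decomposition it suffices to treat the subdirectly irreducible members, and since $\T$ halts these have bounded cardinality; consequently each has a least nontrivial congruence (its monolith) that is generated, and cut out, by terms of bounded length and depth. The role of Moore's extra operation is exactly to let one definably \emph{descend} from an arbitrary pair $a \neq b$ to a pair $c \neq d$ lying inside the monolith, using a formula from a fixed finite $P$, while $\theta(c,d)$ is then defined by a fixed finite $R$.

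The main obstacle is this last, halting, case. Establishing the uniform bounds demands a detailed analysis of the term and polynomial operations of the algebras in ${\sf V}({\sf A'}(\T))$, controlling how the Turing-machine encoding interacts with the new operation so that both the descent to the monolith and the definition of the monolith congruence remain within fixed finite term sets regardless of the size of the subdirectly irreducible. This computation is the technical heart of \cite{Moo15}, and we import it as the black box stated above.
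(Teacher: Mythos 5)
Your proposal takes essentially the same approach as the paper: Theorem \ref{thm::Moore} is stated there purely as an imported result of Moore \cite{Moo15} with no independent proof, and your sketch --- McKenzie's encoding, the appearance of $\mathbf Q_\omega$ up to term equivalence in the non-halting case, and the explicit black-boxing of the halting-case DPSC analysis --- is precisely the account the paper itself gives in its introduction. Since both you and the paper ultimately defer the technical content (in particular the uniform term bounds when $\T$ halts, which is the heart of \cite{Moo15}) to the cited source, the proposal is correct as a citation-level argument.
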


The following fact holds.

\begin{proposition}
\label{prop::when T does not halt}
Assume that there is an algebra $\bf Q$ in $\V$ which is term equivalent to $\bf Q_\omega$. Then $\V_{Bc}$ is not  first-order axiomatizable.
\end{proposition}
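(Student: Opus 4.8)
The plan is to verify the five hypotheses of Proposition \ref{prop::one-point compactification} for the algebra $\mathbf Q_\omega$ (or rather for the term-equivalent copy $\mathbf Q$, but since the conclusion only depends on term operations and the one-point topology, I will argue directly about $\mathbf Q_\omega$ equipped with its natural compactification). Once all five conditions are checked, the conclusion $\V_{Bc}$ is not first-order axiomatizable follows immediately from that proposition. So the real work is entirely in constructing the right topology on $\mathbf Q_\omega$ and confirming conditions (1)--(5).

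First I would record the easy structural facts already noted after the definition of $\mathbf Q_\omega$: it is subdirectly irreducible and locally finite, giving condition (1); and $0$ is a constant of the algebra, giving condition (2). For condition (3) I would put on $Q_\omega$ the one-point compactification topology $\mathcal T$ with condensation point $0$, as defined in the toolbox, so that the non-zero elements $a_0,b_0,a_1,b_1,\dots$ form a discrete set converging to $0$. The genuinely substantive verification is that this $\mathcal T$ is compatible with the operations $\sqcap$ and $\cdot$, i.e.\ that $\undertilde{\mathbf Q}_\omega=(\mathbf Q_\omega,\mathcal T)$ is a Boolean topological algebra; this is needed both to make $\mathcal T$ a legitimate compatible topology and, more importantly, it is the template for the uniform compatibility claim in condition (5). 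I would check continuity of each operation by showing the preimage of the basic open set $\{0\}\cup(Q_\omega\setminus S)$ (for finite $S\subseteq Q_\omega\setminus\{0\}$) is open: because each operation sends all but finitely many pairs to $0$, the preimage of any neighborhood of $0$ is cofinite in an appropriate sense, which is exactly what openness in the one-point topology demands, while preimages of isolated points are finite hence open.

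With compatibility in hand, condition (4) asks that $\undertilde{\mathbf Q}_\omega\in\V_{Bt}\setminus\V_{Bc}$. Membership in $\V_{Bt}$ is clear since the algebraic reduct lies in $\V$ by hypothesis. The non-membership in $\V_{Bc}$ is where I would use Fact \ref{fact:: standardness for varieties}: $\V_{Bc}$ consists exactly of the profinite algebras in $\V$, so I must show $\undertilde{\mathbf Q}_\omega$ is \emph{not} profinite. Here I would invoke subdirect irreducibility together with infiniteness: a profinite algebra is an inverse limit of its finite quotients, and for an infinite subdirectly irreducible algebra the finite Hausdorff quotients cannot separate points whose images agree under the monolith, so the canonical map to the inverse limit of finite quotients fails to be injective (equivalently, the intersection of the clopen congruences is not the identity). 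This is the same phenomenon exhibited by the classical unary example in \cite[Point VI.2.5]{Joh86}, and I expect this step to be the main obstacle, since it requires pinning down precisely which finite algebras are continuous quotients of $\undertilde{\mathbf Q}_\omega$ and arguing they all collapse a fixed pair of elements.

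Finally, condition (5) is the uniform-compatibility requirement: for every algebra $\mathbf B$ modelling the universal theory of $\mathbf Q_\omega$, the one-point compactification $\mathcal T'$ on $B$ with condensation point $0$ must again be compatible. The strategy is to read off from the operation tables a finite set of universal sentences (capturing that $\sqcap$ is idempotent with absorbing $0$, and that $\cdot$ takes value $0$ except on the single ``successor'' pattern) which force, in any model $\mathbf B$, that each basic operation likewise returns $0$ off a controlled set and is therefore continuous for $\mathcal T'$ by the identical cofiniteness argument used in condition (3). The point is that these are genuinely universal (in fact quantifier-free after fixing the relevant variables) consequences of the structure, so they hold in $\mathbf B$, and the continuity proof is uniform across all such $\mathbf B$. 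Having established (1)--(5), Proposition \ref{prop::one-point compactification} yields that $\V_{Bc}$ is not first-order axiomatizable, completing the proof.
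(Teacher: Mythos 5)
Your overall architecture is exactly the paper's: equip the carrier with the one-point compactification topology with condensation point $0$ and verify hypotheses (1)--(5) of Proposition \ref{prop::one-point compactification}. Conditions (1), (2) and (4) are fine (and for (4) you overestimate the difficulty: there is no need to pin down which finite algebras are continuous quotients, since an infinite subdirectly irreducible algebra can never be profinite for purely algebraic reasons --- any family of congruences intersecting to the identity must contain a congruence below the monolith, hence the identity itself, forcing the algebra to be finite; this is just the citation to \cite[Example 7.7]{CDFJ04}). However, there is a genuine error at the heart of your continuity verification, which is the real content of conditions (3) and (5): the claim that ``each operation sends all but finitely many pairs to $0$'' is false. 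The operation $\sqcap$ is nonzero on the entire diagonal ($q\sqcap q=q$ for every $q\neq 0$), and $\cdot$ is nonzero on the infinite family of pairs $(a_i,b_{i+1})$, where $a_i\cdot b_{i+1}=b_i$. So the preimage of a cofinite neighborhood of $0$ is not cofinite ``because almost all pairs map to $0$''; if that premise were true the whole verification would be trivial, which should have been a warning sign.

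What actually makes the set of pairs mapping outside a cofinite $C\ni 0$ finite is \emph{injectivity off zero}: each nonzero output value has at most one preimage pair, so at most $|Q'-C|$ pairs are bad; together with zero-propagation this is what drives the argument. Crucially for condition (5), both facts are expressible by universal sentences, namely $x\circ y\not\approx 0 \to (x\not\approx 0 \land y\not\approx 0)$ and $(x\circ y\approx x'\circ y' \land x\circ y\not\approx 0)\to(x\approx x'\land y\approx y')$, and isolating these two sentences is precisely the content of the paper's proof. Your plan for (5) hides the same gap: for $\sqcap$ your proposed axioms (idempotence, off-diagonal zero) do suffice, but ``$\cdot$ takes value $0$ except on the single successor pattern'' is \emph{not} a universal sentence --- it refers to the infinitely many particular elements $a_i,b_{i+1}$, which are not named in the language --- so it cannot be transported to an arbitrary model $\mathbf Q'$ of the universal theory; the injectivity sentence is the available and sufficient surrogate. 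With those two sentences in place your cofiniteness argument goes through in every such model (for $0\notin C$ one takes singletons, which are open since zero-propagation forces $a,b\neq 0$). Two further small points: ``preimages of isolated points are finite hence open'' is not a valid inference in general --- such a preimage is open because zero-propagation forces all its coordinates to be nonzero, hence isolated; and since $\mathbf Q_\omega$ itself need not lie in $\V$, one must ultimately work with $\mathbf Q$ and with models of the universal theory of $\mathbf Q$, noting (as the paper does) that all basic operations there are compositions of the term operations $\sqcap$, $\cdot$, $0$, so continuity of these three suffices.
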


\begin{proof} 
We equip the set $Q_\omega$ with the Boolean topology $\mathcal T$ such that $(Q_\omega,\mathcal T)$ is the a one-point compactification with the condensation point $0$. It is the case that $\mathcal T$ is compatible with basic operations of $\bf Q_\omega$, and hence with basic operations of $\bf Q$. This means that $\undertilde{\bf Q}=(\bf Q,\mathcal T)$ is a Boolean topological algebra.  
We use Proposition \ref{prop::one-point compactification} for $\undertilde{\bf Q}$. The condition (1)-(3) from this theorem thus hold. The argument for the condition (4), which says that $\undertilde{\bf Q}$ witnesses non-standardness for $\V$, was given in \cite[Example 7.7]{CDFJ04}: Since $\bf Q$ is infinite subdirectly irreducible, $\undertilde{\bf Q}$ cannot be profinite. 

Let us check the condition (5). Let $\bf Q'$ be an algebra of the same type as $\bf Q$ and assume that $\bf Q'$ satisfies every universal sentence which holds in $\bf Q$. Let ${\mathcal T}'$ be the Boolean topology such that $(Q',{\mathcal T}')$ is the one-point compactification with the condensation point $0$.
In $\bf Q'$ we have (term) operations $\cdot$, $\sqcap$ and $0$. We have to verify their continuity. Indeed, all basic operations in $\bf Q'$ may be obtained by composition of these three ones. Thus their continuity will follow.

The operation $0$, as a constant, is obviously continuous. Let $C\in {\mathcal T}'$ and $a,b\in Q'$ be such that $a\circ b \in C$, where $\circ=\cdot$ or $\circ =\sqcap$. We have to find two sets $A,B$ in ${\mathcal T}'$ such that $(a,b)\in A\times B$ and $A\circ B\subseteq C$.

In order to do it let us first observe that $\bf Q$, and hence also $\bf Q'$, satisfy the sentence
\[
(\forall x,y)\;  x\circ y \not\approx 0 \,\to\, ( x\not\approx 0\,\land\, y\not\approx 0). 
\] 
Thus, in the case when $0\not\in C$ we may put $A=\{a\}$ and $B=\{b\}$. 

Let us assume that $0\in C$. Then the set $Q'-C$ is finite and does not contain $0$. Note that $\bf Q$ and $\bf Q'$ satisfy the sentence
\[
(\forall x,y,x',y')\; (x\circ y\approx x'\circ y'\,\land\, x\circ y \not\approx 0) \,\to\, (x\approx x'\,\land\, y\approx y'\,\land\, x\not\approx 0\,\land\, y\not\approx 0). 
\] 
Hence the cardinalities of the sets
\begin{align*}
L =&\; \{a\in Q'\mid \exists d\in Q' \; a\circ d\in Q'-C\},\\
R =& \;\{b\in Q'\mid \exists c\in Q' \; c\circ b\in Q'-C\}
\end{align*}
are not greater than the cardinality of $Q'-C$. Thus they are finite. Moreover,  $0,a\not\in L$ and $0,b\not\in R$. Hence we may put $A=Q'-L$ and $B=Q'-R$.
\end{proof}

\begin{theorem}
\label{thm::A'(T)}
Let $\T$ be a Turing machine and $\A'(T)$ be the algebra constructed as in \cite{Moo15} for $\T$. Then
\begin{itemize}
\item if $\T$ halts on empty input, then ${\sf V}(\sf A' (\T))$ has FDSC;
\item if $\T$ does not halt on empty input, then  ${\sf V}(\sf A'(\T))_{Bc}$ is not  first-order axiomatizable.
\end{itemize} 
\end{theorem}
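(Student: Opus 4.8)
The plan is to obtain both clauses as immediate consequences of the three results already in hand, so that no fresh analysis of ${\sf V}(\sf A'(\T))$ is required. The point is that the real work has been front-loaded into Theorem \ref{thm::Moore}, Corollary \ref{cor::DPSC=>FDSC}, and Proposition \ref{prop::when T does not halt}, and this theorem merely records how they fit together.

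For the first clause, I would suppose that $\T$ halts on empty input and invoke Theorem \ref{thm::Moore} to conclude that ${\sf V}(\sf A'(\T))$ has DPSC. Since ${\sf V}(\sf A'(\T))$ is a variety, it is closed under homomorphic images, so Corollary \ref{cor::DPSC=>FDSC} applies and delivers FDSC. This is the step where the conceptual contribution of the paper pays off: the detailed study of polynomials in ${\sf V}(\sf A'(\T))$ that Moore expected to be necessary is avoided entirely, because the implication from DPSC to FDSC now holds for varieties as a matter of general principle.

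For the second clause, I would suppose that $\T$ does not halt on empty input. Then Theorem \ref{thm::Moore} supplies an algebra $\bf Q$ in ${\sf V}(\sf A'(\T))$ term equivalent to $\bf Q_\omega$, which is exactly the hypothesis of Proposition \ref{prop::when T does not halt}. Taking $\V={\sf V}(\sf A'(\T))$ in that proposition yields at once that ${\sf V}(\sf A'(\T))_{Bc}$ is not first-order axiomatizable.

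I do not expect a genuine obstacle here, since the difficulty resides in the cited results rather than in their combination. The only thing to check is that the hypotheses line up verbatim: that a variety is closed under homomorphic images, as Corollary \ref{cor::DPSC=>FDSC} demands, and that term equivalence to $\bf Q_\omega$ is literally what Proposition \ref{prop::when T does not halt} assumes. Both matches are immediate, so the proof is a short assembly of the preceding machinery.
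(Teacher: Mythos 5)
Your proposal is correct and follows exactly the paper's own proof, which likewise derives the first clause from Theorem \ref{thm::Moore} together with Corollary \ref{cor::DPSC=>FDSC} and the second from Theorem \ref{thm::Moore} together with Proposition \ref{prop::when T does not halt}. The only cosmetic remark is that Corollary \ref{cor::DPSC=>FDSC} as printed concludes ``FDPC'' (a typo for FDSC), and your reading of it as delivering FDSC is the intended one.
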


\begin{proof}
It follows from Corollary \ref{cor::DPSC=>FDSC}, Theorem \ref{thm::Moore} and Proposition \ref{prop::when T does not halt}.
\end{proof}

\begin{proof}[Proof of Theorems \ref{thm::main standard}, \ref{thm::main FDSC} and \ref{thm::main FO}]
By Theorem \ref{thm:: FDSC => standard}, having FDSC yields standardness for varieties and, clearly, standardness yields first-order axiomatizability of the Boolean core. Thus the theorem follows from the undeciability of halting problem and Theorem \ref{thm::A'(T)}.
\end{proof} 

\bibliographystyle{plain}
\bibliography{FDSC20170715}
\end{document}